\def\H{\mathbb H}
\newtheorem{thm}{Theorem}[section]
\newtheorem{lem}{Lemma}[section]
\newtheorem{rmk}{Remark}[section]
\newtheorem{prp}{Proposition}
\newtheorem{cor}{Corollary}[section]
\newtheorem{dfn}[thm]{Definition}
\newcommand{\be}{\begin{equation}}
\newcommand{\ee}{\end{equation}}
\newcommand{\bea}{\begin{eqnarray}}
\newcommand{\eea}{\end{eqnarray}}
\newcommand{\Bea}{\begin{eqnarray*}}
\newcommand{\Eea}{\end{eqnarray*}}
\newcommand{\bt}{\begin{Theorem}}
\newcommand{\et}{\end{Theorem}}
\begin{document}

\title[Recurrent and irregular orbits of the horocyclic flow on the unit tangent bundle of the untwisted flute]{Recurrent and irregular orbits of the horocyclic flow on the unit tangent bundle of the untwisted flute} 

\author[Amadou Sy]{Amadou Sy}
\address{Department of mathematics and computer science at the Cheikh Anta Diop University of Dakar (Senegal)}
\email{amadou22.sy@ucad.edu.sn}

\begin{abstract}
The aim of this article is to show that if there exists  \( u \in \Omega_{h} \subset T^{1}S \) an infinite quasi-minimizing ray which do not intersect any closed geodesic on the surface $S$ (untwisted flute), then
$T_{u}=\{ t \in \mathbb{R} \; ; \; g_{t}u \in \overline{h_{\mathbb{R}}u} \}=\{0\}.
$

\end{abstract}
\footnote{\textbf{Keywords :} horocyclic flow, minimal set, finess asymptotic, limits points, recurrent.\\
AMS 2010 \textit{Mathematics Subject Classification.} Primary 37D40; Secondary 20H10, 14H55, 30F35.}

\maketitle
\section{Introduction}
Let $S$ be an untwisted flute surface and $
u \in \Omega_{h} \subset T^{1}S$ where $\Omega_{h}$ denotes the non-wandering set of the horocycle flow. 
One way to understand the non-minimality of the horocycle flow is to study the intersection 
between the closure of the horocycle orbit and the geodesic orbit of $u$. 
The main results of this paper are the following :

\begin{thm}\label{pa}
Let \( S\) be an untwiste flute and \( u \in \Omega_{h} \subset T^{1}S \) such that the horocyclic orbit \( h_{\mathbb{R}}u \) is neither closed nor dense in the non-wandering set of the horocyclic flow.  
If $u(\mathbb{R^{+}})$ does not intersect any closed  geodesic on the surface $S$, then  
$$
T_{u} = \{ t \in \mathbb{R} \; ; \; g_{t}u \in \overline{h_{\mathbb{R}}u} \} = \{0\}.
$$
\end{thm}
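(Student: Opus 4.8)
The plan is to first rewrite the condition defining $T_u$ as a statement about the orbit closure $K:=\overline{h_{\mathbb{R}}u}$, and then to use the commutation between the two flows. Since the geodesic flow conjugates the horocyclic flow, one has $g_t(h_{\mathbb{R}}u)=h_{\mathbb{R}}(g_t u)$ for every $t\in\mathbb{R}$, and as $g_t$ is a homeomorphism this passes to closures, giving $g_tK=\overline{h_{\mathbb{R}}(g_t u)}$. Now $K$ is closed and $h_{\mathbb{R}}$-invariant, so if $g_t u\in K$ then the whole orbit $h_{\mathbb{R}}(g_t u)$, hence its closure $g_t K$, lies in $K$; conversely $g_t K\subseteq K$ forces $g_t u\in g_t K\subseteq K$ because $u\in K$. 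Thus $T_u=\{t\in\mathbb{R}\;:\;g_t K\subseteq K\}$, which is visibly closed under addition, contains $0$, and is closed in $\mathbb{R}$ (if $t_n\to t$ and $g_{t_n}K\subseteq K$, then for $x\in K$ we get $g_t x=\lim g_{t_n}x\in K$). Hence $T_u$ is a closed sub-semigroup of $(\mathbb{R},+)$ containing $0$, so it is one of $\{0\}$, a half-line, a lattice $c\mathbb{Z}_{\ge 0}$ or $c\mathbb{Z}$, or all of $\mathbb{R}$. In every non-trivial case it contains some $t_0\neq 0$, and it therefore suffices to exclude the existence of such a $t_0$.

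Next I would fix the geometric picture and reduce to $t_0>0$. Lifting $u$ to $T^{1}\mathbb{H}$, let $\xi=\tilde{u}^{+}\in\partial\mathbb{H}$ be the base point of the horocycle carrying $h_{\mathbb{R}}u$; since $u\in\Omega_{h}$ the orbit is recurrent, which forces $\xi\in\Lambda(\Gamma)$ (a base point in the domain of discontinuity would yield a wandering horocycle, contradicting $u\in\Omega_{h}$). If $t_0>0$ lies in $T_u$, then by the semigroup property $g_{nt_0}u\in K$ for all $n\ge 0$, so the forward ray $u(\mathbb{R}^{+})$ meets $K$ along an arithmetic sequence of times while, by hypothesis, it is quasi-minimizing and disjoint from every closed geodesic. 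These last two properties are exactly what force the ray to be divergent: it eventually leaves every compact part of $S$ and exits an end without wrapping around a core geodesic, so that $\xi$ is a horocyclic limit point of the special, non-conical type produced by such escaping rays. The generator $t_0<0$ is treated symmetrically, exchanging the roles of $\tilde{u}^{+}$ and $\tilde{u}^{-}$ and arguing along the backward orbit; I would isolate once and for all the one-sided statement and apply it in each sign.

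Finally I would derive the contradiction from the inclusion $g_{t_0}K\subseteq K$. The idea is that a nested, strictly $g_{t_0}$-invariant orbit closure cannot be reconciled with an orbit that is simultaneously recurrent, non-closed, and non-dense. Concretely, the intersection $L:=\bigcap_{n\ge 0}g_{nt_0}K$ is a non-empty closed set that is invariant under both $h_{\mathbb{R}}$ and $g_{t_0}$; carrying a nontrivial geodesic return together with the horocyclic dynamics, it either reduces to a closed (periodic) configuration or spreads out over all of $\Omega_{h}$. In the first alternative the $g_{t_0}$-invariance produces a closed geodesic $\delta\subseteq \overline{h_{\mathbb{R}}u}$, and the fineness-asymptotic control on the quasi-minimizing ray then shows that $u(\mathbb{R}^{+})$ must cross $\delta$, contradicting the hypothesis that $u(\mathbb{R}^{+})$ meets no closed geodesic; in the second alternative one gets that $\overline{h_{\mathbb{R}}u}$ is either a single closed orbit or dense in $\Omega_{h}$, contradicting that $h_{\mathbb{R}}u$ is neither closed nor dense. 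Either way $t_0\neq 0$ is impossible, whence $T_u=\{0\}$. I expect the crux, and the place where the specific geometry of the untwisted flute enters, to be precisely this last dichotomy: turning the abstract sub-invariance $g_{t_0}K\subseteq K$ into the statement that the base point $\xi$ cannot support a genuinely intermediate recurrent horocycle, i.e. that no room is left for an orbit that is recurrent but neither closed nor dense. Making this rigorous is where the quasi-minimizing/fineness-asymptotic estimates and the avoidance of closed geodesics do the real work, the latter being exactly the device that rules out the ``wrapped around a core geodesic'' configuration that would otherwise permit such an intermediate orbit.
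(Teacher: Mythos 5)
Your reduction of $T_u$ to the sub-invariance condition $g_tK\subseteq K$ (with $K=\overline{h_{\mathbb{R}}u}$) and the observation that $T_u$ is a closed sub-semigroup of $\mathbb{R}$ are correct, and they match what the paper proves separately in its final section on properties of $T_u$. But the heart of your argument --- the dichotomy for $L=\bigcap_{n\ge 0}g_{nt_0}K$, namely that it ``either reduces to a closed (periodic) configuration or spreads out over all of $\Omega_h$'' --- is a genuine gap, and not a fixable one in this form. First, $L$ need not be non-empty: $\Omega_h$ is non-compact here (the surface is geometrically infinite), so a nested intersection of closed invariant sets can very well be empty. Second, and more fundamentally, the proposed dichotomy is exactly the kind of structure theorem that \emph{fails} for geometrically infinite surfaces: closed $h_{\mathbb{R}}$-invariant sets need not contain any minimal set at all (Kulikov, Matsumoto --- both cited in this paper's bibliography), and the theorem's own hypothesis already hands you a closed invariant set, $K$ itself, that is neither a closed orbit nor all of $\Omega_h$. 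So an argument whose engine is ``every such invariant set is periodic or everything'' cannot work; you would be assuming away precisely the intermediate orbits whose existence is the subject of the theorem. (A smaller but symptomatic error: $u\in\Omega_h$ gives $\tilde u^{+}\in\Lambda(\Gamma)$, not recurrence of the orbit.)

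The paper's actual proof runs along entirely different, explicit lines, and it is worth seeing why. It uses Bellis's characterization (Corollary \ref{tunv}): $t\neq 0$ lies in $T_u$ iff there are $\alpha,\gamma_n\in\Gamma$ with $\gamma_n\widetilde u(\infty)\to\alpha\widetilde u(\infty)$ and $B_{\widetilde u(\infty)}(\gamma_n^{-1}i,\alpha^{-1}i)\to t$. The hypothesis that $u(\mathbb{R}^+)$ avoids every closed geodesic enters through Lemma \ref{T}: on an untwisted flute the generators' matrix coefficients satisfy the linear relation coming from orthogonality to the common perpendicular $(-\alpha,+\alpha)$. Combining this with Lemmas \ref{A} and \ref{H}, the possible limits of the coefficients $(a_n,b_n,c_n,d_n)$ fall into three cases; the first forces the limit Busemann value to be $0$ (recurrence, so no nonzero $t$), the second contradicts discreteness of $\Gamma$, and the third is killed either by discreteness (finite asymptotic thinness) or by a concrete geometric contradiction: a cross-ratio computation (Lemma \ref{QI}) shows a certain angle $\theta_n\to\pi$, while Gauss--Bonnet applied to the polygon bounded by the common orthogonal, the axis of $\gamma_n$, and the vertical geodesic forces $\theta_n\le\pi/2$. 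In short, the paper replaces your soft topological-dynamics dichotomy (which is unavailable in this setting) by hard coefficient estimates in $\mathrm{PSL}(2,\mathbb{R})$ plus hyperbolic trigonometry; that substitution is exactly what the untwisted-flute geometry and the no-closed-geodesic hypothesis make possible.
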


This result is of fundamental in the sense that it allows us to highlight the question raised by Bellis in his thesis concerning the flute \( F_{4} \) see (\cite{BellisThesis}).
Let's consider $u \in \Omega_{h} \subset T^{1}S $ such that $h_{\mathbb{R}}u$ is not closed nor dense in $\Omega_{h}.$ If $u(\mathbb{R^{+}})$ does not intersect any close geodesic on the surface then can we have $g_{\mathbb{R}}u \cap \overline{h_{\mathbb{R}}u} = \{0\}?
$
Here, $S$ is a geometrically infinite hyperbolic surface, and $h_{\mathbb{R}}$ and $g_{\mathbb{R}}$ denote the horocyclic and geodesic flows on $T^{1}S$, respectively.  

This theorem generalizes the example of the flute $F_{4}$. Recall that the same phenomenon occurs in the case where the horocyclic orbit is closed (see \cite{BellisThesis}).  
This may appear surprising, but it is due to the special behavior of a certain infinite quasi-minimizing ray on the surface, which does not intersect any closed geodesic.  

As shown by the following theorem:

\begin{thm}\label{TAHA}
There exists a untwisted flute \( F \) and a vector \( u \in \Omega_{h} \subset T^{1}F \), corresponding to an infinite quasi-minimizing geodesic ray, such that:
\begin{enumerate}
    \item The horocyclic orbit \( h_{\mathbb{R}}u \) is neither closed nor dense;
    \item \( \mathrm{Inj}\, u(\mathbb{R}^{+}) = +\infty \);
    \item \( T_{u} = \{ t \in \mathbb{R} \; ; \; g_{t}u \in \overline{h_{\mathbb{R}}u} \} = \{0\}. \)
\end{enumerate}
\end{thm}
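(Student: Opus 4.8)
The plan is to build the flute \(F\) by an explicit uniformization \(F=\mathbb{H}/\Gamma\) with carefully chosen parameters, exhibit the distinguished ray, and then verify the three assertions, noting that (3) is essentially free once (1), (2) and the hypothesis of \thmref{pa} are secured. Concretely, \thmref{pa} requires two inputs: that \(h_{\mathbb{R}}u\) be neither closed nor dense, which is precisely (1), and that \(u(\mathbb{R}^{+})\) meet no closed geodesic. I would obtain the latter from (2): arranging the construction so that every closed geodesic of \(F\) is confined to the (recurrent) core and has length below a uniform bound \(L_{0}\), the condition \(\mathrm{Inj}\,u(\mathbb{R}^{+})=+\infty\) forces \(u(t)\) to leave the \(L_{0}/2\)-thin part for all large \(t\); since any point lying on a closed geodesic of length \(\le L_{0}\) has injectivity radius \(\le L_{0}/2\), the quasi-minimizing ray (which exits monotonically and does not backtrack) then crosses no closed geodesic at all. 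With both hypotheses of \thmref{pa} in hand, \(T_{u}=\{0\}\), which is (3). Thus the genuine work is the construction together with (1) and (2).

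First I would fix \(\Gamma\) to be a free Fuchsian group of the second kind realizing an untwisted flute: generators \(g_{1},g_{2},\dots\) whose axes (or isometric circles) are placed symmetrically with respect to a common geodesic and with all Fenchel--Nielsen twists equal to zero, the core geodesics having \emph{bounded} lengths while the generators march off toward a single boundary point \(\xi\in\partial\mathbb{H}\). The vector \(u\) is taken along the geodesic ray ending at \(\xi=u^{+}\); this is the infinite quasi-minimizing ray of the statement, and the untwisted symmetry is what makes it length-minimizing up to a bounded additive error. The free separation and flaring parameters of the end at \(\xi\) are then chosen to grow, so that the ray runs out an increasingly open (funnel-type) region away from the necks. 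Condition (2) follows from this growth: along the tail of \(u(\mathbb{R}^{+})\) the shortest homotopically nontrivial loop through nearby points has length tending to \(+\infty\), so \(\mathrm{Inj}\) diverges; this is also the step where I must confirm that \(\xi\) is genuinely reached quasi-minimizingly and that the ray never winds around a neck.

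For (1), membership \(u\in\Omega_{h}\) should come from the quasi-minimizing character of the ray: on flutes this is exactly the property placing \(u\) in the non-wandering set of the horocyclic flow, with the returns supplied by the symmetric elements \(\gamma_{n}\in\Gamma\) carrying the horocycle through \(u\) back to within \(o(1)\) of itself as \(n\to\infty\). Non-closedness is then clear, since a closed horocycle must be centered at a parabolic fixed point, whereas \(u^{+}=\xi\) is the endpoint of an escaping ray, not a cusp. Non-density is the delicate half: because \(\mathrm{Inj}\,u(\mathbb{R}^{+})=+\infty\), the point \(\xi\) is a \emph{non-radial} limit point, so the geodesic to it leaves every compact set; I would use this to show that \(\overline{h_{\mathbb{R}}u}\) cannot absorb the horocycles whose centers are radial limit points lying deep in the core, whence \(\overline{h_{\mathbb{R}}u}\subsetneq\Omega_{h}\).

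The main obstacle is the \emph{simultaneous} realization of recurrence and escape: I must tune the parameters \((\ell_{n},d_{n})\) so that the return elements \(\gamma_{n}\) exist (making the horocyclic orbit non-wandering and non-closed) while the geodesic ray still escapes with \(\mathrm{Inj}\to+\infty\) (making the orbit non-dense and, via the confinement of closed geodesics, killing all intersections with them). In other words, \(\xi\) must be a limit point that is horocyclically non-wandering but geodesically non-radial, and the quantitative heart of the argument is the estimate controlling how the translates \(\gamma_{n}h_{\mathbb{R}}u\) approach \(h_{\mathbb{R}}u\) as the ray flees toward \(\xi\). Once this balance is achieved, (1) and (2) hold and (3) follows at once from \thmref{pa}.
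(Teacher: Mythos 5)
Your overall strategy (construct an untwisted flute, verify (1) and (2), then feed \thmref{pa} to get (3)) matches the paper's framing, but two of your key steps have genuine gaps.

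First, your proof of non-closedness is wrong. You argue that a closed horocyclic orbit must be centered at a parabolic fixed point; on geometrically infinite surfaces this is false, and the paper's own Section 3 recalls the correct dichotomy: $h_{\mathbb{R}}u$ is closed and non-periodic if and only if $u^{+}\in\Lambda_{d}$, the set of \emph{discrete} limit points. So ``not a cusp'' does not give ``not closed''; you must rule out $u^{+}\in\Lambda_{d}$, and this is precisely the delicate point, because your condition (2), $\mathrm{Inj}\,u(\mathbb{R}^{+})=+\infty$, is exactly the regime in which discrete limit points typically occur. The paper resolves this tension quantitatively: the generators $f_{n}$ are built from perpendicular bisectors of the segments $[i,p_{n}]$ with $B_{\infty}(p_{n},i)=\epsilon_{n}\to 0$, so that $f_{n}^{-1}i=p_{n}$ returns to within $\epsilon_{n}$ of the horocycle through $i$; by the horocyclic convergence criterion this yields an explicit vector $v\in\overline{h_{\mathbb{R}}u}\setminus h_{\mathbb{R}}u$ (hence the orbit is not closed), while the orbit $\Gamma i$ stays below a fixed horoball, so $\infty\notin\Lambda_{h}$ and the orbit is not dense. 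Your proposal contains no analogue of this tuning, and without it the simultaneous realization of (1) and (2) --- which you yourself identify as the ``main obstacle'' --- is left unproved.

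Second, your derivation of the hypothesis of \thmref{pa} (that $u(\mathbb{R}^{+})$ meets no closed geodesic) from (2) fails. An infinitely generated Fuchsian group has closed geodesics of arbitrarily large length (axes of products of generators), so your premise that every closed geodesic of $F$ has length below a uniform bound $L_{0}$ is impossible; the thin-part argument you invoke only excludes crossings of geodesics of length at most $L_{0}$, and only along the tail of the ray, whereas \thmref{pa} requires that the \emph{entire} ray avoid \emph{all} closed geodesics. (There is a further tension: uniformly short core geodesics work against $\mathrm{Inj}\,u(\mathbb{R}^{+})=+\infty$; in the paper's construction one has $\liminf_{n}\ell(f_{n})=+\infty$.) In the paper the avoidance is not obtained from injectivity radius at all but from the explicit geometry: the nested axes of the $f_{n}$ have both endpoints on one side of the vertical ray $[i,\infty)$, so the ray never meets them. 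Note also that the paper does not merely cite \thmref{pa}: it proves (3) directly via Corollary~\ref{tunv} and Lemma~\ref{T} --- from $\gamma_{n}(\infty)\to\infty$ and $B_{\infty}(\gamma_{n}^{-1}i,i)\to t_{1}$ one extracts $a_{n}\to a\neq 0$ and $c_{n}\to 0$, then $b_{n}\to 0$ and $d_{n}\to 1/a$, and discreteness forces $\gamma_{n}$ to be eventually constant with $a=d=1$, a contradiction --- so even the final step rests on an argument your outline delegates to a theorem whose hypothesis you have not secured.
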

This document is structured as follows. We begin by recalling some useful results in the first part. Then, in the second part, we prove the main theorem, and after we will give also the proof of Theorem~\ref{TAHA}, which is an application of the main theorem. And finally we will present some properties of $T_{u}.$

\section{Preliminaries}

The study of infinite-type hyperbolic surfaces, particularly twisted and untwisted flutes, dates back to the works of Ara Basmajian and Andrew Haas (see \cite{AB} ).  
We may adopt the following definition of a flute:

\begin{dfn}
A hyperbolic surface \( S = \Gamma \setminus \mathbb{H} \) is called a \emph{flute} if \( \Gamma \) is an infinite Schottky group generated by a sequence \( (\gamma_n)_{n \geq 1} \) of hyperbolic isometries whose axes are nested.
\end{dfn}

We distinguish two types of flutes: the \textbf{twisted} and the \textbf{untwisted}. To define them, let us denote for each \( n \) by \( a_n \) the hyperbolic segment joining the axis of \( \gamma_n \) to that of \( \gamma_{n+1} \), orthogonal to both.
\begin{dfn}[Untwisted and Twisted Flutes]
A flute \( F \) is said to be \textbf{untwisted} if the geodesic segments \( a_n \) all lie on the same half-geodesic \( \tilde{u}(\mathbb{R}^+) \) in \( \mathbb{H} \). Otherwise, \( F \) is said to be \textbf{twisted}.
\end{dfn}



\section{Convergence in $T^{1}S$, Asymptotic Thinness, and Limit Points}

In this section, we recall some results on convergence in $ T^{1}S$ and on the asymptotic thinness of geodesic half-rays, which will be necessary to prove Theorem \ref{pa}.

Let $S=\Gamma\setminus\mathbb{H}$ be a hyperbolic surface, where $\Gamma$ is a Fuchsian group with no elliptic elements.

For $x \in S$ and  $u \in T^1S$, we denote by $\widetilde{x}$ and $\widetilde{u}$ arbitrary lifts of $x$ and $u$ in $\mathbb{H}$ and $T^1\mathbb{H}$, respectively.

Let \( (\widetilde{u}(t))_{t \geq 0} \) be the unit-speed parametrization of the geodesic half-ray \( \widetilde{u}(\mathbb{R}_{+}) \subset \mathbb{H} \), defined by \( \widetilde{u} \), with origin \( \widetilde{u}(0) \) and endpoint at infinity \( \widetilde{u}(\infty) \).

We denote by \( u(\mathbb{R}_{+}) \) the geodesic half-ray on \( S \) defined by \( u \), and by \( (u(t))_{t \geq 0} \) its projection onto \( S \), that is, the projection of \( (\widetilde{u}(t))_{t \geq 0} \).
\subsection{Characterization of the Elements of $T_{u}$}

Let \( u \in T^1S \) and \( \widetilde{u} \) be a lift of \( u \) to \( T^1\mathbb{H} \). For \( z \in \mathbb{H} \), denote by \( B_{\widetilde{u}(\infty)}(\widetilde{u}(0), z) \) the Busemann cocycle centered at \( \widetilde{u}(\infty) \), evaluated at \( \widetilde{u}(0) \) and \( z \).  

The set \( \{ z \in \mathbb{H} \mid B_{\widetilde{u}(\infty)}(\widetilde{u}(0), z) = 0 \} \) is the horocycle centered at \( \widetilde{u}(\infty) \) and passing through \( \widetilde{u}(0) \).  
(For properties of the Busemann cocycle, see \cite{Dalbo2007}.)

We now show that determining whether a real number \( t \) belongs to \( T_u \) is equivalent to constructing a sequence \( (\gamma_n)_{n \geq 0} \) of elements in \( \Gamma \) satisfying two conditions.  
This is based on the horocyclic convergence proved in \cite{BellisThesis}.
\newpage
More precisely:

\begin{prp}\cite{BellisThesis},( Proposition 2.3.2)
Let \( u \) and \( v \) be two elements of \( T^1S \). Then \( v \in \overline{h_{\mathbb{R}}(u)} \) if and only if there exists a sequence \( (\gamma_n)_{n \geq 0} \subset \Gamma \) such that the following two conditions are satisfied:
\begin{itemize}
    \item[\textnormal{(i)}] \( \displaystyle\lim_{n \to +\infty} \gamma_n \widetilde{u}(\infty) = \widetilde{v}(\infty) \).

    \item[\textnormal{(ii)}] \( \displaystyle\lim_{n \to +\infty} B_{\widetilde{u}(\infty)}(\gamma_n^{-1}i, \widetilde{u}(0)) = B_{\widetilde{v}(\infty)}(i, \widetilde{v}(0)) \).
\end{itemize}

\end{prp}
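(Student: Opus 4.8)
The plan is to encode the two pieces of data appearing in (i) and (ii) as a single $h_{\mathbb{R}}$-invariant coordinate on $T^{1}\mathbb{H}$, and then to read off the equivalence from the continuity of that coordinate together with a lifting property for convergence modulo the horocyclic flow. For $w \in T^{1}\mathbb{H}$ I would set
\[
p(w) \;=\; \bigl(w(\infty),\, \beta(w)\bigr) \in \partial\mathbb{H} \times \mathbb{R}, \qquad \beta(w) \;=\; B_{w(\infty)}\bigl(i,\, w(0)\bigr),
\]
so that $\beta(w)$ records the Busemann level of the horocycle carrying $w$. The whole argument is then organized around the fact that $p$ is essentially the quotient map of $T^{1}\mathbb{H}$ by $h_{\mathbb{R}}$.

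First I would record three elementary properties of $p$. It is continuous, because $w \mapsto w(\infty)$ and $w \mapsto w(0)$ are continuous and the Busemann cocycle $B_{\xi}(x,y)$ depends continuously on $(\xi,x,y)$. It is constant along horocyclic orbits: $h_{s}w$ has the same forward endpoint and lies on the same horocycle as $w$, so $B_{w(\infty)}(w(0), (h_{s}w)(0)) = 0$, and the cocycle identity gives $\beta(h_{s}w) = \beta(w)$. Finally, the $\Gamma$-equivariance $B_{\gamma\xi}(\gamma x, \gamma y) = B_{\xi}(x,y)$, applied after writing $i = \gamma_{n}(\gamma_{n}^{-1}i)$, yields
\[
\beta(\gamma_{n}\widetilde{u}) \;=\; B_{\gamma_{n}\widetilde{u}(\infty)}\bigl(i,\, \gamma_{n}\widetilde{u}(0)\bigr) \;=\; B_{\widetilde{u}(\infty)}\bigl(\gamma_{n}^{-1}i,\, \widetilde{u}(0)\bigr),
\]
so that convergence of $p(\gamma_{n}\widetilde{u})$ to $p(\widetilde{v}) = \bigl(\widetilde{v}(\infty),\, B_{\widetilde{v}(\infty)}(i,\widetilde{v}(0))\bigr)$ is exactly the conjunction of (i) and (ii).

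Granting this, the forward implication is almost immediate. If $v \in \overline{h_{\mathbb{R}}(u)}$ there are $s_{n} \in \mathbb{R}$ and $\gamma_{n} \in \Gamma$ with $\gamma_{n} h_{s_{n}}\widetilde{u} \to \widetilde{v}$ in $T^{1}\mathbb{H}$; applying the continuous, horocycle-invariant map $p$ gives $p(\gamma_{n}\widetilde{u}) = p(\gamma_{n} h_{s_{n}}\widetilde{u}) \to p(\widetilde{v})$, which is precisely (i)--(ii). For the converse I would invoke the lifting property: whenever $p(w_{n}) \to p(\widetilde{v})$ there exist $s_{n}$ with $h_{s_{n}}w_{n} \to \widetilde{v}$ in $T^{1}\mathbb{H}$. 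Taking $w_{n} = \gamma_{n}\widetilde{u}$, which satisfies $p(w_{n}) \to p(\widetilde{v})$ by hypothesis, produces $h_{s_{n}}(\gamma_{n}\widetilde{u}) \to \widetilde{v}$; since the horocyclic orbit of $\gamma_{n}\widetilde{u}$ is $\gamma_{n}$ applied to that of $\widetilde{u}$, I may rewrite $h_{s_{n}}(\gamma_{n}\widetilde{u}) = \gamma_{n} h_{s'_{n}}\widetilde{u}$, and projecting to $T^{1}S$ gives $h_{s'_{n}}u \to v$, that is $v \in \overline{h_{\mathbb{R}}(u)}$.

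The hard part will be the lifting property, the only genuinely geometric ingredient; it says precisely that $p$ induces a homeomorphism $T^{1}\mathbb{H}/h_{\mathbb{R}} \cong \partial\mathbb{H} \times \mathbb{R}$, equivalently that horocycles vary continuously with their center and level. I would argue that $p(w_{n}) \to (\xi,b) = p(\widetilde{v})$ forces the endpoints $w_{n}(\infty) \to \xi = \widetilde{v}(\infty)$ while the horocycles carrying the $w_{n}$ converge, uniformly on compact sets, to the horocycle $H$ through $\widetilde{v}(0)$ centered at $\xi$; choosing points $q_{n}$ on the $n$-th horocycle with $q_{n} \to \widetilde{v}(0)$ and letting $h_{s_{n}}w_{n}$ be the vector based at $q_{n}$ pointing to $w_{n}(\infty)$, I obtain a vector whose basepoint tends to $\widetilde{v}(0)$ and whose endpoint tends to $\widetilde{v}(\infty)$, hence $h_{s_{n}}w_{n} \to \widetilde{v}$ under the identification $T^{1}\mathbb{H} \cong \mathbb{H} \times \partial\mathbb{H}$. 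The continuity of horocycles is clearest in the upper half-plane model, where they are Euclidean circles tangent to $\partial\mathbb{H}$ (and horizontal lines when the center is $\infty$), so the real care lies in this model-dependent verification and in isolating the case where a center escapes to $\infty$. Alternatively, identifying $T^{1}\mathbb{H}$ with $\mathrm{PSL}(2,\mathbb{R})$ and $h_{\mathbb{R}}$ with right multiplication by the unipotent subgroup $N$, the lifting property reduces to the openness of the quotient map $\mathrm{PSL}(2,\mathbb{R}) \to \mathrm{PSL}(2,\mathbb{R})/N$.
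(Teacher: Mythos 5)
Your proposal is correct, but there is no proof in the paper to compare it against: the paper quotes this proposition verbatim from Bellis's thesis (Proposition 2.3.2) and uses it as a black box, so your argument has to be judged on its own, and it holds up. The reduction of (i)--(ii) to the single convergence $p(\gamma_n\widetilde{u})\to p(\widetilde{v})$ is right: the identity $B_{\gamma_n\widetilde{u}(\infty)}(i,\gamma_n\widetilde{u}(0))=B_{\widetilde{u}(\infty)}(\gamma_n^{-1}i,\widetilde{u}(0))$ is exactly the isometry-invariance of the Busemann cocycle, and the fibers of $p$ are precisely the $h_{\mathbb{R}}$-orbits because a unit vector is determined by its basepoint and forward endpoint. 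The forward implication is then continuity of $p$ plus the exact commutation $\gamma\circ h_s=h_s\circ\gamma$ on $T^1\mathbb{H}$. Your lifting property, the only substantive step, can be closed more cleanly than your sketch suggests: since Busemann functions are $1$-Lipschitz with unit gradient, the distance from $\widetilde{v}(0)$ to the horocycle $H_n=\{z\in\mathbb{H}\mid B_{\xi_n}(i,z)=b_n\}$ equals $\lvert B_{\xi_n}(i,\widetilde{v}(0))-b_n\rvert$, which tends to $0$ by hypothesis; so the nearest-point projections $q_n$ of $\widetilde{v}(0)$ onto $H_n$ converge to $\widetilde{v}(0)$, and the vectors based at $q_n$ pointing to $\xi_n$ converge to $\widetilde{v}$ by continuity of the map sending (basepoint, forward endpoint) to the corresponding unit vector. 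This disposes of the model-dependent case analysis you were worried about (alternatively, conjugate so that $\widetilde{v}(\infty)\neq\infty$, or work in the disk model where every horocycle is a Euclidean circle). Be aware that this orbit-space parametrization $T^1\mathbb{H}/h_{\mathbb{R}}\cong\partial\mathbb{H}\times\mathbb{R}$ is the standard route to such convergence criteria --- essentially the argument of Bellis's thesis and of Dal'bo's book --- so in substance you have reconstructed the cited proof rather than found a genuinely different one.
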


For \( t \neq 0 \), if we set \( v = g_t u \), then there exists \( \alpha \in \Gamma \) such that \( \widetilde{v}(\infty) = \alpha \widetilde{u}(\infty) \) and \( \widetilde{v}(0) = \widetilde{g}_t \alpha \widetilde{u}(0) \).  
The points (i) and (ii) from the previous proposition then become:

\[
\textnormal{(i)'} \quad \lim_{n \to +\infty} \gamma_n \widetilde{u}(\infty) = \alpha \widetilde{u}(\infty),
\]

\[
\textnormal{(ii)'} \quad \lim_{n \to +\infty} B_{\widetilde{u}(\infty)}(\gamma_n^{-1}i, \widetilde{u}(0)) = B_{\alpha \widetilde{u}(\infty)}(i, \alpha \widetilde{g}_t \widetilde{u}(0)) = B_{\widetilde{u}(\infty)}(\alpha^{-1} i, \widetilde{u}(0)) + t.
\]

We then obtain the following corollary:

\begin{cor}\label{tunv}
Let \( u \in T^1S \). Then a nonzero real number \( t \) belongs to the set \( T_u \) if and only if there exist \( \alpha \in \Gamma \) and a sequence \( (\gamma_n)_{n \geq 0} \subset \Gamma \) such that the following two conditions hold:
\begin{itemize}
    \item[\textnormal{(i)}] \( \displaystyle\lim_{n \to +\infty} \gamma_n \widetilde{u}(\infty) = \alpha \widetilde{u}(\infty), \)
    \item[\textnormal{(ii)}] \( \displaystyle\lim_{n \to +\infty} B_{\widetilde{u}(\infty)}(\gamma_n^{-1} i, \alpha^{-1} i) = t. \)
\end{itemize}
\end{cor}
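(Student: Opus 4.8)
The plan is to specialize the cited Proposition to the vector $v = g_t u$ and then translate its conditions (i) and (ii) into the two conditions of the corollary purely by manipulating the Busemann cocycle. Since the Proposition is an equivalence and every step of the translation is reversible, this will establish the ``if and only if'' in both directions at once.

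First I would record that, by definition of $T_u$, the membership $t \in T_u$ is exactly the statement $g_t u \in \overline{h_{\mathbb{R}}u}$, so applying the Proposition to $v = g_t u$ reduces the claim to the equivalence between its conditions (i)/(ii) and the corollary's (i)/(ii). Fixing a lift $\widetilde{u}$ of $u$, any lift $\widetilde{v}$ of $g_t u$ has the form $\widetilde{v} = \alpha\,\widetilde{g}_t\widetilde{u}$ for some $\alpha \in \Gamma$, whence $\widetilde{v}(\infty) = \alpha\widetilde{u}(\infty)$ and $\widetilde{v}(0) = \widetilde{g}_t\alpha\widetilde{u}(0)$; this is precisely the observation made just before the statement. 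It turns condition (i) of the Proposition into the corollary's (i), and turns condition (ii) into the form (ii)', namely $\lim_{n}B_{\widetilde{u}(\infty)}(\gamma_n^{-1}i,\widetilde{u}(0)) = B_{\widetilde{u}(\infty)}(\alpha^{-1}i,\widetilde{u}(0)) + t$.

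The only genuinely new step is then to pass from (ii)' to the corollary's (ii). Here I would invoke the two standard identities for the Busemann cocycle centered at $\xi := \widetilde{u}(\infty)$: the chain (cocycle) rule $B_\xi(x,z) = B_\xi(x,y) + B_\xi(y,z)$ and antisymmetry $B_\xi(x,y) = -B_\xi(y,x)$. Applying these with $x = \gamma_n^{-1}i$, $y = \widetilde{u}(0)$, $z = \alpha^{-1}i$ gives $B_\xi(\gamma_n^{-1}i,\alpha^{-1}i) = B_\xi(\gamma_n^{-1}i,\widetilde{u}(0)) - B_\xi(\alpha^{-1}i,\widetilde{u}(0))$. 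Substituting the limit supplied by (ii)' collapses the right-hand side to $t$, which is exactly the corollary's (ii). Reading the same chain of equalities backwards recovers (ii)' from (ii), so the two conditions are equivalent and the equivalence holds in both directions.

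I do not expect a real obstacle: the corollary is a bookkeeping reformulation of the Proposition, and the only thing requiring care is the correct additive bracketing of the cocycle and the sign conventions, together with confirming that the existence of $\alpha$ and the reversibility of the cocycle manipulation yield the equivalence rather than just one implication. The step I would double-check is the $\Gamma$-equivariance identity $B_{\alpha\xi}(i,\alpha w) = B_\xi(\alpha^{-1}i,w)$, combined with $B_{\widetilde{u}(\infty)}(\widetilde{u}(0),\widetilde{u}(t)) = t$, that produces the ``$+\,t$'' in (ii)'; this is where a sign or ordering error would most easily creep in.
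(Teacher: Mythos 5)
Your proposal is correct and follows essentially the same route as the paper: the paper also obtains the corollary by specializing the cited proposition to $v = g_t u$, writing $\widetilde{v}(\infty)=\alpha\widetilde{u}(\infty)$, $\widetilde{v}(0)=\widetilde{g}_t\alpha\widetilde{u}(0)$, deriving (i)' and (ii)', and then absorbing the term $B_{\widetilde{u}(\infty)}(\alpha^{-1}i,\widetilde{u}(0))$ via the cocycle identity to get condition (ii). Your extra care about the sign conventions, the equivariance identity producing the ``$+\,t$'', and the reversibility of each step only makes explicit what the paper leaves implicit.
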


\begin{cor}\label{MM}
A non-periodic horocyclic orbit \( h_{\mathbb{R}} u \) is recurrent if and only if there exists a sequence \( (\gamma_n)_{\mathbb{N}} \) of pairwise distinct elements of \( \Gamma \) such that:
\begin{enumerate}
    \item \( \displaystyle\lim_{n \to +\infty} \gamma_n \widetilde{u}(\infty) = \widetilde{u}(\infty) \);
    \item \( \displaystyle\lim_{n \to +\infty} B_{\widetilde{u}(\infty)}(\gamma_n^{-1} i, i) = 0 \).
\end{enumerate}
\end{cor}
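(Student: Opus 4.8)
The plan is to derive Corollary~\ref{MM} directly from the horocyclic convergence criterion (Proposition 2.3.2 of \cite{BellisThesis}) specialized to $v = u$, and then to account for genuine recurrence—as opposed to the trivial membership $u \in \overline{h_{\mathbb{R}}u}$—through the requirement that the group elements be pairwise distinct. First I would unwind the definition of recurrence. Since the orbit is non-periodic, the inclusion $u \in \overline{h_{\mathbb{R}}u}$ holds trivially and carries no information; recurrence means instead that $u$ is an accumulation point of its own orbit, i.e. there is a sequence $(t_n)$ with $|t_n| \to +\infty$ and $h_{t_n}u \to u$ in $T^1S$. Lifting this convergence to $T^1\mathbb{H}$, it is equivalent to the existence of $(\gamma_n) \subset \Gamma$ with $\gamma_n h_{t_n}\widetilde{u} \to \widetilde{u}$.

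Next I would apply Proposition 2.3.2 with $v = u$. Since $\widetilde{v}(\infty) = \widetilde{u}(\infty)$ and $\widetilde{v}(0) = \widetilde{u}(0)$, the criterion reads: such a sequence exists precisely when $\lim_{n} \gamma_n \widetilde{u}(\infty) = \widetilde{u}(\infty)$ together with $\lim_{n} B_{\widetilde{u}(\infty)}(\gamma_n^{-1} i, \widetilde{u}(0)) = B_{\widetilde{u}(\infty)}(i, \widetilde{u}(0))$. Applying the cocycle identity of the Busemann function, namely $B_{\xi}(\gamma_n^{-1} i, \widetilde{u}(0)) = B_{\xi}(\gamma_n^{-1} i, i) + B_{\xi}(i, \widetilde{u}(0))$ with $\xi = \widetilde{u}(\infty)$, the second condition collapses—since $B_{\widetilde{u}(\infty)}(i, \widetilde{u}(0))$ is independent of $n$—to $\lim_{n} B_{\widetilde{u}(\infty)}(\gamma_n^{-1} i, i) = 0$. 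These are exactly conditions (1) and (2) of the statement.

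The main obstacle, and the only place where recurrence rather than closure membership enters, is to match the dynamical statement ``$h_{\mathbb{R}}u$ is recurrent'' with the algebraic statement ``the $\gamma_n$ may be chosen pairwise distinct.'' In one direction, if $|t_n| \to +\infty$ then $h_{t_n}\widetilde{u}$ travels arbitrarily far along its horocycle, so the compensating isometries $\gamma_n$ bringing $\gamma_n h_{t_n}\widetilde{u}$ back near the fixed vector $\widetilde{u}$ must displace the basepoint $i$ by unbounded amounts; by proper discontinuity of the $\Gamma$-action the set $\{\gamma \in \Gamma : d(\gamma i, i) \le R\}$ is finite for each $R$, so after passing to a subsequence the $\gamma_n$ are pairwise distinct. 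Conversely, a sequence of pairwise distinct elements of the discrete group $\Gamma$ necessarily leaves every compact set, whence the return times $t_n$ it produces satisfy $|t_n| \to +\infty$, giving genuine recurrence and ruling out the trivial accumulation coming from the constant choice $\gamma_n = \mathrm{id}$. I would be careful at this step to invoke the non-periodicity of the orbit to exclude a returning sequence collapsing to a fixed finite time, and to justify cleanly that escaping isometries force divergent horocyclic times.
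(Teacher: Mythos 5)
Your proof is correct and follows exactly the route the paper intends: the paper states Corollary~\ref{MM} without any proof, as an immediate consequence of Proposition 2.3.2 of \cite{BellisThesis} specialized to $v=u$ (just as Corollary~\ref{tunv} is the specialization to $v=g_t u$), with the Busemann cocycle relation collapsing condition (ii) to $B_{\widetilde{u}(\infty)}(\gamma_n^{-1}i,i)\to 0$, which is precisely your derivation. The matching of genuine recurrence with pairwise distinctness of the $\gamma_n$ via proper discontinuity is the only substantive detail the paper leaves implicit, and you handle it correctly in both directions.
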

\subsection{Asymptotic Thinness}

We recall here only the results that will be needed later. For more details on this notion, the reader may refer to \cite{SyGaye2025}.

\begin{dfn}
Let \( x \in S \) and \( u \in T^1S \).

The injectivity radius of \( S \) at \( x \) is the quantity defined by:
\[
\mathrm{Inj}(x) = \inf_{\gamma \in \Gamma \setminus \{ \mathrm{Id} \}} d(\widetilde{x}, \gamma \widetilde{x}),
\]
where \( \widetilde{x} \) is any lift of \( x \) to \( \mathbb{H} \).

The \emph{asymptotic thinness} of the geodesic half-ray \( u(\mathbb{R}_{+}) \) is the quantity denoted by \( \mathrm{Inj}(u(\mathbb{R}_{+})) \), defined as:
\[
\mathrm{Inj}(u(\mathbb{R}_{+})) = \liminf_{t \to +\infty} \mathrm{Inj}(u(t)).
\]
\end{dfn}
\newpage
\begin{lem}\label{T}
Let $F = \mathbb{H}/\Gamma$ be an untwisted flute. Let $u\in\Omega_{h}\subset T^{1}S$ a quasi-minimizing ray such that $\tilde{u}(\mathbb{R^{+}})\cap(\gamma_{n}^{-};\gamma_{n}^{+})=\emptyset$. Let $(\alpha; \beta)$ be the orthogonal commun of the axe of $(\gamma_{n})_{n}$ denoted by $(\gamma_{n}^{-};\gamma_{n}^{+})$ , where $\Gamma = \langle \gamma_n;n\in\mathbb{N} \rangle $ is a geometrically infinite Fuchsian group. Then :
\begin{enumerate}
    \item If \( (\alpha; \beta) = (0; \infty) \), then \( a_n = d_n \) for all \( n \in \mathbb{N} \).
    
    \item If \( \alpha \) and \( \beta \) are real, then
    $
    (a_n - d_n)(\alpha + \beta) + 2b_n = 2\alpha \beta c_n \quad \text{for all } n \in \mathbb{N}.
    $
    
    \item If \( (\alpha; \beta) = (\alpha; +\infty) \) with \( \alpha \in \mathbb{R} \), then
    $
    a_n - d_n = 2c_n \quad \text{for all } n \in \mathbb{N}.
    $
\end{enumerate}
\end{lem}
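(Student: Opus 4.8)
The plan is to translate the geometric orthogonality encoded in the untwisted-flute hypothesis into algebraic relations among the matrix entries of each \( \gamma_n \), via the classical Vieta formulas for the fixed points. Writing \( \gamma_n = \begin{pmatrix} a_n & b_n \\ c_n & d_n \end{pmatrix} \) with \( a_n d_n - b_n c_n = 1 \), the two fixed points \( \gamma_n^{\pm} \) are the roots of \( c_n z^2 + (d_n - a_n) z - b_n = 0 \), so that \( \gamma_n^{-} + \gamma_n^{+} = \frac{a_n - d_n}{c_n} \) and \( \gamma_n^{-}\gamma_n^{+} = -\frac{b_n}{c_n} \). The whole lemma will follow once these are combined with the single geometric fact that the geodesic \( (\alpha;\beta) \) is orthogonal to every axis \( (\gamma_n^{-};\gamma_n^{+}) \).

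First I would establish that geometric fact. By the definition of an untwisted flute, all the perpendicular segments \( a_n \) lie on the common half-geodesic \( \widetilde{u}(\mathbb{R}^{+}) \); since each \( a_n \) is orthogonal to both the axis of \( \gamma_n \) and that of \( \gamma_{n+1} \), the supporting geodesic of \( \widetilde{u}(\mathbb{R}^{+}) \), whose endpoints are \( (\alpha;\beta) \), is orthogonal to every axis \( (\gamma_n^{-};\gamma_n^{+}) \). The hypothesis \( \widetilde{u}(\mathbb{R}^{+})\cap(\gamma_n^{-};\gamma_n^{+})=\emptyset \) ensures that this half-ray is the correct common perpendicular to the nested family, and that \( \infty \) is not a fixed point of any \( \gamma_n \) when it is an endpoint of \( (\alpha;\beta) \), so that \( c_n \neq 0 \) and the Vieta formulas apply.

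Next I would compute each case directly from the orthogonality condition. When \( (\alpha;\beta)=(0;\infty) \), orthogonality to the imaginary axis forces the axis-semicircle to be centered at \( 0 \), that is \( \gamma_n^{-}+\gamma_n^{+}=0 \); by Vieta this reads \( \frac{a_n-d_n}{c_n}=0 \), giving \( a_n=d_n \) and proving (1). When \( (\alpha;\beta)=(\alpha;+\infty) \), orthogonality to the vertical line \( x=\alpha \) forces the axis-semicircle to be centered at \( \alpha \), that is \( \gamma_n^{-}+\gamma_n^{+}=2\alpha \), whence \( a_n-d_n=2\alpha c_n \), which is (3) (and (1) is its sub-case \( \alpha=0 \)). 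When \( \alpha,\beta \) are both real, I would use that the inversion in the semicircle \( (\alpha;\beta) \), of center \( \frac{\alpha+\beta}{2} \) and squared radius \( \frac{(\beta-\alpha)^{2}}{4} \), swaps the endpoints of any orthogonal geodesic; applying this to \( (\gamma_n^{-};\gamma_n^{+}) \) yields \( (\alpha+\beta)(\gamma_n^{-}+\gamma_n^{+})=2(\alpha\beta+\gamma_n^{-}\gamma_n^{+}) \). Substituting the two Vieta relations and clearing \( c_n \) gives \( (a_n-d_n)(\alpha+\beta)+2b_n=2\alpha\beta c_n \), which is (2).

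The main obstacle I anticipate is not the algebra, which is routine Vieta together with the circle-orthogonality criterion, but the careful justification of the geometric step: namely that under the non-intersection hypothesis the ray \( \widetilde{u}(\mathbb{R}^{+}) \) really does lie on the common perpendicular to the entire nested family, and that the branch labelling of \( \gamma_n^{\pm} \) together with the sign of \( c_n \) is consistent across all \( n \). I would address this by invoking the nested-axes property of the Schottky generators and the untwisted condition to pin down \( (\alpha;\beta) \) uniquely, and by noting that each of the three identities is invariant under the simultaneous swap \( \gamma_n^{-}\leftrightarrow\gamma_n^{+} \), so that the labelling ambiguity does not affect the conclusion.
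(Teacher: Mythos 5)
Your proposal is correct and fills in what the paper dismisses as ``a straightforward computation,'' but by a somewhat different route. The paper's proof rests, as yours does, on the single geometric fact that the common orthogonal \((\alpha;\beta)\) meets every axis \((\gamma_n^-;\gamma_n^+)\) at a right angle; it then invokes Lemma~\ref{QI} with angle \(\pi/2\), i.e.\ the cross-ratio equation \([\alpha,\gamma_n^-,\gamma_n^+,\beta]=\tfrac12\), which is expanded and combined (implicitly) with the Vieta relations \(\gamma_n^-+\gamma_n^+=\frac{a_n-d_n}{c_n}\), \(\gamma_n^-\gamma_n^+=-\frac{b_n}{c_n}\) to produce the three identities. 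You instead encode orthogonality by elementary circle geometry: a semicircle orthogonal to a vertical line is centered at its foot, and inversion in the semicircle \((\alpha;\beta)\) swaps the endpoints of any orthogonal geodesic. This yields the fixed-point relations \(\gamma_n^-+\gamma_n^+=0\), \(\gamma_n^-+\gamma_n^+=2\alpha\), and \((\alpha+\beta)(\gamma_n^-+\gamma_n^+)=2(\alpha\beta+\gamma_n^-\gamma_n^+)\) in the three cases, after which Vieta finishes. The two computations are equivalent --- the cross-ratio value \(\tfrac12\) is exactly the inversion-swapping condition --- but yours avoids Beardon's angle formula and makes the degeneration structure transparent: (1) and (3) are literally the limits \((\alpha,\beta)\to(0,\infty)\) and \(\beta\to+\infty\) of (2).

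Two remarks. First, your case (3) computation gives \(a_n-d_n=2\alpha c_n\), not the printed \(a_n-d_n=2c_n\); your formula is the correct one (it agrees with the limit \(\beta\to+\infty\) of identity (2) and with the centered-at-\(\alpha\) criterion), so the statement of (3) in the paper is missing a factor of \(\alpha\) and holds as printed only when \(\alpha=1\). You should flag this discrepancy explicitly rather than silently identify your formula with (3). Second, your justification of the ``geometric step'' conflates the quasi-minimizing ray \(\tilde u(\mathbb{R}^+)\) of the hypothesis with the defining ray of the untwisted flute: the hypothesis \(\tilde u(\mathbb{R}^+)\cap(\gamma_n^-;\gamma_n^+)=\emptyset\) makes that ray disjoint from every axis, so it cannot lie on the common perpendicular, which crosses every axis orthogonally. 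This does not damage your argument, because the lemma hands you \((\alpha;\beta)\) as the common orthogonal (for an untwisted flute such a geodesic exists by definition, since all the segments \(a_n\) lie on one geodesic); the orthogonality you need is thus part of the hypothesis, exactly as in the paper's proof, and the ray \(u\) plays no role in the computation. A last pedantic point: in case (2) one generator could in principle have a vertical axis through the center \(\frac{\alpha+\beta}{2}\), i.e.\ \(c_n=0\), where Vieta in your form does not apply; the identity \((a_n-d_n)(\alpha+\beta)+2b_n=2\alpha\beta c_n\) still holds there by a one-line direct check, and it would be worth saying so.
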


\begin{proof}
We consider the twisted flute obtained as the quotient of the unit tangent bundle of \( \mathbb{H} \) by the Fuchsian group \( \Gamma \) generated by the \( \gamma_n \).

Since we are dealing with a twisted flute, the axis \( (\alpha; \beta) \) is orthogonal to the axis of \( \gamma_n \), that is, to \( (\gamma_n^{-}; \gamma_n^{+}) \).

A straightforward computation yields the result by applying Lemma \ref{QI}.
\end{proof}

\subsection{Nature of the points of the limit set}
Let $\Gamma$ be a fuchsian group without elleptic element, and    $\Sigma=\Gamma\backslash\mathbb{H}$ be the associated hyperbolic surface. Let us denote here 
 $\Lambda=\overline{\Gamma.i}\setminus\Gamma.i$ the limit set of the group $\Gamma$. In $\Lambda$ we distinguish four types of points according to the orbit $\Gamma.i$ intersects the open horodisks based at this limit point (see \textbf{\cite{DalboStarkov2000}}).

- A point $\eta$ of $\Lambda$ is said to be horocyclic if for any open horodisk $O_{\eta}$ based in $\eta$, the set $\Gamma.i \cap O_{\eta}$ is infinite. We denote $\Lambda_h $for all of these points.

- A point $\eta$ of $\Lambda$ is said to be discrete if for any open horodisk $O_{\eta}$ based in $\eta$, the set $\Gamma.i \cap O_{\eta}$ is finite. We denote $\Lambda_d $ for all of these points.

- A point $\eta$ of $\Lambda$ is said to be parabolic if it is fixed by a parabolic isometry of $\Gamma$. We denote $\Lambda_p$ for all of these points.
- A point $\eta$ of $\Lambda$ is said to be irregular if it is neither horocyclic, nor discrete and neither
parabolic. We note $\Lambda_{irr}$ for all of these points.

The sets $\Lambda_h, \Lambda_d, \Lambda_p$ and $\Lambda_{irr}$ form an invariant partition $\Gamma$ of $\Lambda$.

\begin{rmk}{plhd}
Let be $\infty$ a point in the limit set $\Lambda$.

- $\infty\notin\Lambda_h$ if and only if there exists $M>0$ such for all $\gamma\in\Gamma, Im(\gamma(i))\leq M.$

- $\infty\notin\Lambda_d$  if there exists a non-constant sequence $(Im(\gamma_n(i)))_{n\geq 1}$ which tends towards $l>0$.
\end{rmk}
The following well-known theorem (see for example \textbf{\cite{DalboStarkov2000}}) gives a characterization of the geometric finiteness of the surface as a function of the limit points.
\begin{thm}
The surface $\Sigma=\Gamma\setminus\H$ is geometrically finite if and only if $\Lambda= \Lambda_h\cup\Lambda_p$. 
\end{thm}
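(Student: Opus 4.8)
The plan is to prove both implications by reducing to the behaviour of the orbit $\Gamma\cdot i$ near a single boundary point, which we may place at $\infty$ after conjugating by a Möbius transformation, since both geometric finiteness and the partition $\Lambda=\Lambda_h\cup\Lambda_d\cup\Lambda_p\cup\Lambda_{irr}$ are conjugation invariant. The whole argument then turns on the dichotomy recorded in the Remark above: $\infty\notin\Lambda_h$ exactly when the heights $\mathrm{Im}(\gamma(i))$ stay bounded, and a non-constant sequence $\mathrm{Im}(\gamma_n(i))\to l>0$ already forces $\infty\notin\Lambda_d$. Thus a limit point lies in $\Lambda_h\cup\Lambda_p$ precisely when either the orbit reaches it with unbounded height (horocyclic case) or it is fixed by a parabolic element (parabolic case), and the content of the theorem is that these two possibilities exhaust $\Lambda$ if and only if $\Sigma$ is geometrically finite.

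For the direct implication I would start from the standard fact that a geometrically finite Fuchsian group without elliptics admits a finite-sided convex fundamental polygon, equivalently is finitely generated. The finitely many ideal vertices fixed by parabolics account for $\Lambda_p$, each being a bounded parabolic fixed point. For any other limit point $\eta$ I would use finite-sidedness to show that the orbit approaches $\eta$ radially, i.e.\ within bounded distance of the geodesic ray toward $\eta$; conjugating $\eta$ to $\infty$, this says $\mathrm{Im}(\gamma_n(i))$ is unbounded, so by the Remark $\eta\in\Lambda_h$. Hence $\Lambda_d=\Lambda_{irr}=\emptyset$ and $\Lambda=\Lambda_h\cup\Lambda_p$.

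For the converse I would argue by contraposition. If $\Sigma$ is not geometrically finite then $\Gamma$ is infinitely generated, and I would produce a limit point lying in $\Lambda_d\cup\Lambda_{irr}$. In the flute model this is transparent: the generators $\gamma_n$ have nested axes $(\gamma_n^-;\gamma_n^+)$ accumulating at a boundary point $\xi$, and after sending $\xi$ to $\infty$ the nested Schottky configuration forces the orbit of $i$ to reach $\xi$ only through excursions of bounded height that drift off laterally. By the Remark $\xi\notin\Lambda_h$; and since the Schottky group is purely hyperbolic we have $\Lambda_p=\emptyset$, so $\xi\notin\Lambda_p$. According to the finer behaviour of the heights $\mathrm{Im}(\gamma(i))$ along these axes, $\xi$ falls into $\Lambda_{irr}$ or $\Lambda_d$, and in either case $\Lambda\neq\Lambda_h\cup\Lambda_p$.

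I expect the main obstacle to be this converse direction: turning ``infinitely generated'' into an explicit limit point outside $\Lambda_h\cup\Lambda_p$ requires controlling $\mathrm{Im}(\gamma(i))$ along the accumulating axes finely enough to certify that the heights stay bounded, ruling out $\Lambda_h$, yet behave irregularly enough to land in $\Lambda_d$ or $\Lambda_{irr}$; this is where the geometric estimates on the nested configuration, together with Lemma~\ref{T}, do the real work. The forward direction's step ``every non-parabolic limit point of a geometrically finite group is radial, hence horocyclic'' also rests on the finite-sidedness input of Beardon--Maskit, which I would cite rather than reprove.
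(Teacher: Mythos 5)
First, a remark on the comparison itself: the paper does not prove this statement. It is quoted as a ``well-known theorem'' with a pointer to \cite{DalboStarkov2000}, and its hard direction is effectively re-quoted just below as the proposition of \cite{GayeLo2017} (\(\Gamma\) is geometrically infinite iff \(\Lambda_d \cup \Lambda_{\mathrm{irr}} \neq \emptyset\)). So your proposal has to be judged against the literature, not against an in-paper argument. Your forward direction is fine in outline and is of the same citation-based nature: geometric finiteness of a Fuchsian group gives finite generation and a finite-sided polygon, Beardon--Maskit gives that every limit point is conical or bounded parabolic, and a conical point, once conjugated to \(\infty\), has unbounded heights \(\operatorname{Im}(\gamma(i))\), hence lies in \(\Lambda_h\) by the Remark.

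The genuine gap is in your converse. From ``\(\Sigma\) not geometrically finite'' you correctly get ``\(\Gamma\) infinitely generated,'' but you then argue only ``in the flute model'': nested axes of purely hyperbolic Schottky generators accumulating at a point \(\xi\), with \(\Lambda_p=\emptyset\). A general geometrically infinite Fuchsian group is not a flute: it may have infinite genus, infinitely many cusps and funnels in an arbitrary pattern, and when parabolics are present the step \(\Lambda_p=\emptyset\) is unavailable. So your argument proves at best a special case of the stated theorem. Moreover, even inside the flute case, the key assertion --- that the orbit of \(i\) approaches \(\xi\) ``only through excursions of bounded height'' --- is exactly the claim \(\xi\notin\Lambda_h\), and it is asserted rather than proven; bounded heights toward \(\xi\) is equivalent to quasi-minimality of the geodesic ray ending at \(\xi\), which is the crux, not a visible feature of the nested picture (and Lemma \ref{T}, which records coefficient identities for untwisted flutes, does not supply it). The standard general argument runs differently: geometric infiniteness forces the convex core of \(\Sigma\) to be unbounded; one takes minimizing geodesic segments in the core from a fixed basepoint to points going to infinity, extracts a limit ray that is minimizing on \(\Sigma\), stays in the core, and does not go out a cusp; its endpoint then lies in \(\Lambda\) but is neither horocyclic (quasi-minimality bounds the heights) nor parabolic, hence lies in \(\Lambda_d\cup\Lambda_{\mathrm{irr}}\). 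That is the content of the Gaye--Lo proposition the paper quotes, and your sketch does not reach it; to repair your proof, either carry out this convex-core construction or cite \cite{GayeLo2017} (or \cite{DalboStarkov2000}) for the converse just as you cite Beardon--Maskit for the forward direction.
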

\subsection{Correspondence between limit points and the topological nature of orbits.}

For $u \in T^1\Sigma$, we denote by $\widetilde{u}$ a lift of $u$, and by $u^{+}$ the positive endpoint of the geodesic in $\mathbb{H}^2$ defined by $\widetilde{u}$. 
It is well known that $u$ belongs to $\Omega_h$ if and only if the endpoint $u^{+}$ belongs to the limit set $\Lambda$. 
There is thus a correspondence between the topological nature of the orbit $h_{\mathbb{R}}(u)$ and the nature of the limit point $u^{+}$ in $\Lambda$ (see for instance \textbf{\cite{Starkov1995}} and \textbf{\cite{DalboStarkov2000}}). 
More precisely, we have the following correspondence:

{\it 
\begin{itemize}
  \item the orbit $h_{\mathbb{R}}(u)$ is dense in $\Omega_h$ if and only if $u^{+} \in \Lambda_h$;
  \item the orbit $h_{\mathbb{R}}(u)$ is periodic if and only if $u^{+} \in \Lambda_p$;
  \item the orbit $h_{\mathbb{R}}(\pi(u))$ is closed and non-periodic if and only if $u^{+} \in \Lambda_d$;
  \item the orbit $h_{\mathbb{R}}(\pi(u))$ is irregular (that is, neither closed nor dense in $\Omega_h$) if and only if $u^{+} \in \Lambda_{irr}$.
\end{itemize}
}


We have the following decomposition, which can be found in \cite{DalboStarkov2000}:

\begin{prp}
Let \( \Gamma \) be a geometrically infinite Fuchsian group.  
Then
\[
\Lambda_{\Gamma} = \Lambda_h \cup \Lambda_d \cup \Lambda_p \cup \Lambda_{\mathrm{irr}}.
\]
\end{prp}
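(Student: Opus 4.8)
The plan is to read off the identity directly from the defining conditions of the four families, since \( \Lambda_{\mathrm{irr}} \) is introduced precisely as the set of limit points that fail all three of the preceding conditions. Concretely, I would fix an arbitrary \( \eta \in \Lambda \) and classify it according to the distribution of the orbit \( \Gamma.i \) in the open horodisks \( O_\eta \) based at \( \eta \), which is exactly the data underlying the definitions of \( \Lambda_h \), \( \Lambda_d \), and \( \Lambda_p \).

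First I would treat the parabolic case: if \( \eta \) is fixed by a parabolic element of \( \Gamma \), then \( \eta \in \Lambda_p \) and there is nothing further to check. Assuming \( \eta \notin \Lambda_p \), I would compare the cardinality \( \#\!\left(\Gamma.i \cap O_\eta\right) \) as \( O_\eta \) ranges over all horodisks based at \( \eta \). Exactly one of three possibilities occurs: the intersection is infinite for every \( O_\eta \), so \( \eta \in \Lambda_h \); it is finite for every \( O_\eta \), so \( \eta \in \Lambda_d \); or neither uniform condition holds, in which case \( \eta \) is, by definition, irregular and lies in \( \Lambda_{\mathrm{irr}} \). This trichotomy together with the parabolic alternative yields \( \Lambda \subseteq \Lambda_h \cup \Lambda_d \cup \Lambda_p \cup \Lambda_{\mathrm{irr}} \); the reverse inclusion is immediate, as each of the four sets is defined as a subset of \( \Lambda \). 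Note that geometric infiniteness is not used in the identity itself: by the geometric-finiteness criterion recalled above, that hypothesis merely guarantees that the two exotic pieces \( \Lambda_d \) and \( \Lambda_{\mathrm{irr}} \) may be nonempty, whereas for a geometrically finite group one has \( \Lambda = \Lambda_h \cup \Lambda_p \).

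The substantive content, and the step I expect to require the most care, is not the union but the disjointness implicit in the earlier claim that these four sets form a \( \Gamma \)-invariant partition of \( \Lambda \). Disjointness of \( \Lambda_h \) and \( \Lambda_d \) is automatic, since for a fixed horodisk the count \( \#\!\left(\Gamma.i \cap O_\eta\right) \) is either finite or infinite but not both. The delicate point is to separate \( \Lambda_p \) from \( \Lambda_h \) and from \( \Lambda_d \), that is, to verify that a parabolic fixed point exhibits genuinely mixed behavior: the orbit meets large horodisks infinitely often, yet meets sufficiently small ones only finitely often. I would establish this by conjugating \( \eta \) to \( \infty \) so that a generating parabolic becomes \( z \mapsto z + \tau \), and then invoking the two criteria of the remark above — the uniform bound \( \mathrm{Im}(\gamma i) \leq M \), which forces \( \eta \notin \Lambda_h \), and the existence of a nonconstant orbit sequence with \( \mathrm{Im}(\gamma_n i) \) bounded away from \( 0 \), which rules out \( \eta \in \Lambda_d \). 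With these pairwise-disjointness facts in place, the union established above upgrades to the asserted partition.
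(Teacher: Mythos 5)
Your argument for the stated identity is correct, and it is essentially the only argument available: the paper itself gives no proof of this proposition (it is quoted directly from Dal'bo--Starkov \cite{DalboStarkov2000}), and with the definitions adopted in Section 3 the identity is definitional, since $\Lambda_{\mathrm{irr}}$ is introduced exactly as the set of limit points that are neither horocyclic, nor discrete, nor parabolic. Your trichotomy (first split off the parabolic case; then, for a non-parabolic point, either every horodisk meets $\Gamma.i$ in an infinite set, or every horodisk meets it in a finite set, or neither, the last case landing in $\Lambda_{\mathrm{irr}}$ by definition) is a complete verification, and you are right that geometric infiniteness plays no role in the union itself --- it only makes $\Lambda_d\cup\Lambda_{\mathrm{irr}}$ potentially nonempty.

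The one reservation concerns the disjointness discussion, which goes beyond what the proposition asserts. Your key claim there --- that a parabolic fixed point meets sufficiently small horodisks only finitely often, equivalently that after normalizing the parabolic to $z\mapsto z+\tau$ one has a uniform bound $\operatorname{Im}(\gamma i)\leq M$ over $\gamma\in\Gamma$ --- does not follow from conjugation together with the remark you invoke: that remark merely restates the definition of $\Lambda_h$ in terms of the existence of such a bound; it does not produce the bound. Producing it at a parabolic point is a genuine theorem about discrete groups (Shimizu--Leutbecher: if $z\mapsto z+1$ belongs to $\Gamma$ and $\gamma z=\frac{az+b}{cz+d}$ belongs to $\Gamma$, then $c=0$ or $|c|\geq 1$, whence $\operatorname{Im}(\gamma i)\leq 1$ for all $\gamma$); without it, nothing in your sketch excludes orbit points climbing arbitrarily high in the horodisks based at the parabolic point. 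By contrast, $\Lambda_p\cap\Lambda_d=\emptyset$ is indeed easy, but note that the remark's criterion for this (a \emph{non-constant} sequence of imaginary parts converging to $l>0$) does not literally apply, since the parabolic orbit $p^n(\gamma i)$ has constant imaginary part; the direct argument is that any horodisk containing one orbit point contains its entire, infinite, $\Gamma_\eta$-orbit. Since the proposition asserts only the union, these gaps do not affect the statement you were asked to prove, but the ``partition'' upgrade is not established as written.
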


\begin{prp}[\cite{GayeLo2017}]
The group \( \Gamma \) is geometrically infinite if and only if \( \Lambda_d \cup \Lambda_{\mathrm{irr}} \neq \emptyset \).
\end{prp}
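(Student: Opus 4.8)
The plan is to deduce the statement directly, by contraposition, from the characterization of geometric finiteness already recorded above, combined with the fact that the four families of limit points form a \emph{partition} of $\Lambda$. Recall that we have the decomposition
$$
\Lambda = \Lambda_h \cup \Lambda_d \cup \Lambda_p \cup \Lambda_{\mathrm{irr}},
$$
in which the four sets are pairwise disjoint (this is exactly the content of the invariant partition of $\Lambda$ stated earlier). Recall also the well-known theorem that $\Sigma = \Gamma \setminus \mathbb{H}$ is geometrically finite if and only if $\Lambda = \Lambda_h \cup \Lambda_p$.

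First I would observe that, precisely because the partition is disjoint, the equality $\Lambda = \Lambda_h \cup \Lambda_p$ holds if and only if the two remaining pieces contribute nothing, that is, if and only if $\Lambda_d \cup \Lambda_{\mathrm{irr}} = \emptyset$. Indeed, by disjointness the complement of $\Lambda_h \cup \Lambda_p$ inside $\Lambda$ is exactly $\Lambda_d \cup \Lambda_{\mathrm{irr}}$, so $\Lambda_h \cup \Lambda_p$ exhausts $\Lambda$ if and only if this complement is empty.

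Combining these two facts, $\Gamma$ is geometrically finite if and only if $\Lambda_d \cup \Lambda_{\mathrm{irr}} = \emptyset$. Taking the logical negation of both sides, and using that ``geometrically infinite'' means precisely ``not geometrically finite,'' I obtain that $\Gamma$ is geometrically infinite if and only if $\Lambda_d \cup \Lambda_{\mathrm{irr}} \neq \emptyset$, which is the desired equivalence.

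The argument is essentially bookkeeping, so there is no genuine obstacle. The only point that requires care is the disjointness of the partition, since it is disjointness alone that guarantees the complement of $\Lambda_h \cup \Lambda_p$ in $\Lambda$ equals $\Lambda_d \cup \Lambda_{\mathrm{irr}}$; a merely covering (non-disjoint) decomposition would not suffice for the equivalence. As this disjointness has already been established, the proposition follows immediately by contraposition from the geometric finiteness criterion.
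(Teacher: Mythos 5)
Your proof is correct, but there is nothing in the paper to compare it against: the paper states this proposition as an imported result, citing \cite{GayeLo2017}, and gives no proof of it. Your argument therefore does something the paper does not, namely derive the statement as a formal consequence of the two preliminaries recorded just before it --- the partition $\Lambda = \Lambda_h \cup \Lambda_d \cup \Lambda_p \cup \Lambda_{\mathrm{irr}}$ and the theorem that $\Sigma = \Gamma \setminus \mathbb{H}$ is geometrically finite if and only if $\Lambda = \Lambda_h \cup \Lambda_p$ --- after which negating both sides of the equivalence yields the proposition. This is sound, and it makes the statement self-contained rather than deferred to the literature. One refinement worth recording: you do not actually need the full pairwise disjointness of the four sets. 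The implication $\Lambda_d \cup \Lambda_{\mathrm{irr}} = \emptyset \Rightarrow \Lambda = \Lambda_h \cup \Lambda_p$ uses only the covering property, and the converse only requires that $\Lambda_d$ and $\Lambda_{\mathrm{irr}}$ be disjoint from $\Lambda_h \cup \Lambda_p$. For $\Lambda_{\mathrm{irr}}$ this holds by definition (irregular means none of the other three); for $\Lambda_d$ it holds because a point cannot be simultaneously discrete and horocyclic (the two defining conditions on horodisks contradict each other), and a parabolic point is never discrete (the orbit of $i$ under the parabolic subgroup fixing $\eta$ lies on a single horocycle based at $\eta$, so every horodisk containing that horocycle meets $\Gamma.i$ in an infinite set). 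In particular, your argument does not depend on whether $\Lambda_h$ and $\Lambda_p$ can intersect, which is the only portion of the disjointness claim that would require genuine work to justify.
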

\begin{lem}\label{A}
Let \( \Gamma \) be a geometrically infinite Fuchsian group.  
Let \( u \in \Omega_h \subset T^1S \) such that \( \widetilde{u}(\infty) \in \Lambda_{\mathrm{irr}} \).  
Then there exist constants \( b > a > 0 \) and a sequence \( (\gamma_n)_n \) of elements in \( \Gamma \) such that:
\begin{enumerate}
    \item For all \( n > 0 \), \( \operatorname{Im}(\gamma_n i) \in (a, b) \),
    \item \( \gamma_n(i) \to \infty \).
\end{enumerate}
\end{lem}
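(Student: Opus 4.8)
The plan is to reduce everything to the normalized situation where the irregular limit point is $\infty$, and then to read off the two required bounds directly from the partition of $\Lambda$ recalled above. First I would conjugate $\Gamma$ by an isometry of $\mathbb{H}$ so that $\widetilde{u}(\infty) = \infty$; this changes neither the surface nor the nature of the limit point, so the hypothesis $\widetilde{u}(\infty) \in \Lambda_{\mathrm{irr}}$ becomes simply $\infty \in \Lambda_{\mathrm{irr}}$. In particular $\infty \notin \Lambda_h$ and $\infty \notin \Lambda_d$, and these are the only two facts I will use (the exclusion $\infty \notin \Lambda_p$ is not needed for this statement).

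From $\infty \notin \Lambda_h$, the first bullet of the Remark recalled above gives a constant $M > 0$ with $\operatorname{Im}(\gamma i) \le M$ for every $\gamma \in \Gamma$; this furnishes the upper bound $b := M + 1$. From $\infty \notin \Lambda_d$, I would unwind the definition of a discrete point: since it is false that every open horodisk based at $\infty$ meets $\Gamma.i$ in a finite set, there exists a horodisk $\{\operatorname{Im} z > a\}$, with $a > 0$, containing infinitely many orbit points. Because $\Gamma$ has no elliptic elements, distinct group elements produce distinct points $\gamma i$, so I obtain infinitely many distinct points $\gamma_n i$ with $a < \operatorname{Im}(\gamma_n i) \le M < b$, which already establishes item (1).

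It remains to arrange $\gamma_n i \to \infty$. Here I would invoke the proper discontinuity of the $\Gamma$-action: the orbit $\Gamma.i$ is locally finite, so any compact subset of $\mathbb{H}$ — in particular any truncated strip $\{\, a \le \operatorname{Im} z \le b,\ |\operatorname{Re} z| \le R \,\}$ — contains only finitely many orbit points. Since infinitely many of the $\gamma_n i$ lie in the strip $\{\, a < \operatorname{Im} z \le M \,\}$, their real parts cannot remain bounded; passing to a subsequence I get $|\operatorname{Re}(\gamma_n i)| \to \infty$ while $\operatorname{Im}(\gamma_n i)$ stays in $(a,b)$. Arguing in the closed disk model then shows that bounded imaginary part together with $|\operatorname{Re}| \to \infty$ forces convergence to the boundary point $\infty$, which is item (2).

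The routine parts are the two definitional extractions and the local finiteness count. The point that most deserves care — and what I regard as the main obstacle — is the last step: one must check that horizontal escape inside a strip of bounded height is genuinely convergence to the \emph{boundary point} $\infty$, and not to some finite point of $\mathbb{R} = \partial\mathbb{H} \setminus \{\infty\}$. This is exactly where the normalization $\widetilde{u}(\infty) = \infty$ pays off, and it is most transparently verified via the Cayley transform $z \mapsto (z-i)/(z+i)$, under which a sequence with $\operatorname{Im}$ bounded in $(a,b)$ and $|\operatorname{Re}| \to \infty$ tends to the image of $\infty$.
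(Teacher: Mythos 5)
Your proposal is correct and follows essentially the same route as the paper: normalize so that $\widetilde{u}(\infty)=\infty$, use $\infty\notin\Lambda_h$ for the upper bound on $\operatorname{Im}(\gamma i)$, use $\infty\notin\Lambda_d$ plus the absence of elliptic elements to get infinitely many distinct orbit points in the strip, and then use discreteness of $\Gamma$ to force accumulation at the boundary point $\infty$. The only cosmetic difference is that the paper extracts an accumulation point $x\in\partial\mathbb{H}$ of the orbit points in the strip and rules out $x\in\mathbb{R}$ because $\operatorname{Im}\geq a>0$, whereas you force $|\operatorname{Re}(\gamma_n i)|\to\infty$ via local finiteness of the orbit on compact truncated strips --- two phrasings of the same discreteness argument.
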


\begin{proof}
Up to conjugating the group \( \Gamma \) by an element of \( PSL(2, \mathbb{R}) \), we may assume that \( \widetilde{u}(\infty) = \infty \in \Lambda_{\mathrm{irr}} \). Since \( \infty \notin \Lambda_h \), there exists \( b' > 0 \) such that

$\left( \Gamma.i \cap \operatorname{Int}(O_{\infty}(ib')) \right) < \infty,$
where \( \operatorname{Int}(O_{\infty}(ib')) \) denotes the interior of the horodisk centered at \( \infty \) and passing through \( ib' \).  
That is, \( \operatorname{Int}(O_{\infty}(ib')) = \{ z \in \mathbb{H} \; ; \; \operatorname{Im}(z) > b' \} \).

Hence, except for finitely many elements \( \gamma_1, \gamma_2, \ldots, \gamma_p \in \Gamma \), we have:
$
\gamma(i) \notin \operatorname{Int}(O_{\infty}(ib')).
$
Thus, We have $\operatorname{Im}(\gamma(i)) \leq b' $. By setting 
$
b = \max \left( b', \max_{1 \leq k \leq p} \operatorname{Im}(\gamma_k i) \right),
$
we get for all \( \gamma \in \Gamma \),
$
\operatorname{Im}(\gamma(i)) \leq b.
$
Since \( \infty \notin \Lambda_d \), there exists \( a > 0 \) such that 
$
\# \left( \Gamma.i \cap \operatorname{Int}(O_{\infty}(i a)) \right) = +\infty.
$

Thus, there exist infinitely many elements \( \gamma \in \Gamma \) such that
$
\gamma(i) \in \operatorname{Int}(O_{\infty}(i a)).
$

As \( \Gamma \) contains no elliptic elements, we can assume the isometries \( \gamma \) are pairwise distinct.

Hence, there exist infinitely many distinct elements \( \gamma \) such that
$
a \leq \operatorname{Im}(\gamma(i)) \leq b.
$
Therefore, the set 
$
M = \{ \gamma(i) \mid a \leq \operatorname{Im}(\gamma(i)) \leq b \}
$
is infinite. Since \( \Gamma \) is discrete, this set admits an accumulation point
$
x \in \partial \mathbb{H} = \mathbb{R} \cup \{\infty\}.
$

Consequently, there exists a sequence of pairwise distinct elements \( (\gamma_n) \subset \Gamma \) such that
$
\gamma_n i \to x,
$
with
$
a \leq \operatorname{Im}(\gamma_n i) \leq b,
$
so that
$
\operatorname{Im}(\gamma_n i) \geq a > 0.
$

Hence, \( x \in \mathbb{R} \cup \{\infty\} \) and necessarily
$
x = \infty.
$
\end{proof}
\begin{lem}\label{H}
Let \(\Gamma\) be a geometrically infinite Fuchsian group.  
Let \(u \in \Omega_{h} \subset T^{1}S\) such that the orbit \(h_{\mathbb{R}} u\) is not closed nor dense in $\Omega_{h}$.  
Then there exists a sequence \((\gamma_n)_{n} \subset \Gamma\) of pairwise distinct elements such that \(\gamma_n(\infty) \to \infty\).
\end{lem}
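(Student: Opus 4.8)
The plan is to reduce the statement to the \emph{minimality} of the limit set and thereby avoid any computation with matrix entries. After conjugating $\Gamma$ by a suitable element of $\mathrm{PSL}(2,\mathbb{R})$ I would normalize so that $\widetilde{u}(\infty)=\infty$; the conclusion $\gamma_n(\infty)\to\infty$ then reads exactly as the statement that the boundary point $\widetilde{u}(\infty)=\infty$ is an accumulation point of its own orbit $\Gamma\cdot\infty$. Since $u\in\Omega_h$, its endpoint lies in the limit set, so $\infty\in\Lambda$; and since $h_{\mathbb{R}}u$ is neither closed nor dense, the correspondence recalled above puts $\infty$ in $\Lambda_{\mathrm{irr}}$, so in particular $\infty$ is not a parabolic fixed point.

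First I would use two standard facts for the non-elementary group $\Gamma$ (non-elementary because it is geometrically infinite): the limit set $\Lambda$ is \emph{perfect}, and the action of $\Gamma$ on $\Lambda$ is \emph{minimal}, i.e. $\overline{\Gamma\cdot\xi}=\Lambda$ for every $\xi\in\Lambda$. Taking $\xi=\infty$ gives $\overline{\Gamma\cdot\infty}=\Lambda$.

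Next I would argue that $\infty$ accumulates points of its own orbit. Since $\Lambda$ is perfect and $\infty\in\Lambda=\overline{\Gamma\cdot\infty}$, the point $\infty$ is not isolated in $\Lambda$, so any neighbourhood $U$ of $\infty$ contains a point $\eta\in\Lambda$ with $\eta\neq\infty$. Shrinking to a neighbourhood $V\subset U$ with $\eta\in V$ and $\infty\notin\overline{V}$, the density of $\Gamma\cdot\infty$ in $\Lambda$ forces $V$ to contain some $\gamma\cdot\infty\neq\infty$. As $U$ is arbitrary, $\infty$ is an accumulation point of $\Gamma\cdot\infty$; this produces $\gamma_n$ with $\gamma_n(\infty)\to\infty$ and $\gamma_n(\infty)\neq\infty$. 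Because the values $\gamma_n(\infty)$ are then infinitely many distinct points, I would pass to a subsequence along which the $\gamma_n$ are pairwise distinct, which is the required sequence.

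The delicate point is the passage from ``$\infty$ is a limit point of $\Lambda$'' to ``$\infty$ is a limit point of the single orbit $\Gamma\cdot\infty$, through points $\neq\infty$'', which is exactly where perfectness of $\Lambda$ together with the non-parabolicity of $\infty$ (ensured by $\infty\in\Lambda_{\mathrm{irr}}$) enters to exclude the degenerate case where $\infty$ is isolated in $\Gamma\cdot\infty$. I note that one might instead try to read the sequence off from \lemref{A}, which gives $\gamma_n$ with $\gamma_n i\to\infty$ and $\operatorname{Im}(\gamma_n i)$ bounded; but $\gamma_n i\to\infty$ yields $\gamma_n(\infty)\to\infty$ only when the repelling limit $\lim_n\gamma_n^{-1}i$ is different from $\infty$, and guaranteeing this requires extra control on the entries of $\gamma_n$. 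The minimality argument bypasses that case analysis.
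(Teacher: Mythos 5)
Your proof is correct, and it takes a genuinely different route from the paper's. The paper deduces the lemma from Lemma~\ref{A}: it takes the sequence $(\alpha_n)$ with $\alpha_n i\to\infty$ and $a\le\operatorname{Im}(\alpha_n i)\le b$, writes each $\alpha_n$ in coefficients $(a_n,b_n,c_n,d_n)$, uses $\operatorname{Im}(\alpha_n i)=(c_n^2+d_n^2)^{-1}$ to bound $(c_n)$ and $(d_n)$ and extract convergent subsequences, and then argues by cases: if $c_n\to c\neq 0$, boundedness of the imaginary part forces $\operatorname{Re}(\alpha_n i)\to\infty$, hence $\alpha_n(\infty)=a_n/c_n\to\infty$; if $c_n\to 0$, it rules out $a_n\to 0$ (that would place infinitely many orbit points in every horodisk based at $\infty$, making $\infty$ horocyclic), so $|a_n/c_n|\to\infty$ again. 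This is precisely the ``extra control on the entries'' you mention at the end and deliberately bypass. Your argument via minimality of the $\Gamma$-action on $\Lambda$ together with perfectness of $\Lambda$ is shorter, computation-free, and strictly more general: it needs only $\widetilde{u}(\infty)\in\Lambda$ and non-elementarity of $\Gamma$ (which does follow from geometric infiniteness, since elementary Fuchsian groups are finitely generated and hence geometrically finite). In fact, contrary to what you suggest in your last paragraph, the irregularity of $\infty$ is never used in your argument: perfectness alone prevents $\infty$ from being isolated in $\Lambda$, and minimality then supplies orbit points $\gamma\infty\neq\infty$ arbitrarily close to $\infty$, whatever the type of the limit point. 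Two caveats are worth recording. First, you invoke two standard facts (perfectness of $\Lambda$ and minimality of the action on $\Lambda$) that the paper never states; they are classical and available in the cited book of Dal'bo, but they should be quoted explicitly. Second, the paper's computational proof produces more than the bare statement: it yields the limiting behavior of the matrix coefficients of the $\gamma_n$, and it is exactly this extra information --- not the statement of Lemma~\ref{H} itself --- that the proof of Theorem~\ref{pa} consumes when it lists the three possible coefficient regimes. So your proof establishes the lemma as stated, but on its own it would not support the way the lemma is used downstream.
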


\begin{proof}
By the previous lemma, there exists a sequence \((\alpha_n)_n\) of pairwise distinct elements in \(\Gamma\) and constants \(b \geq a > 0\) such that:  
\begin{enumerate}
    \item \(\alpha_n i \to \infty\),
    \item \(a \leq \operatorname{Im}(\alpha_n i) \leq b\).
\end{enumerate}

For each \(n \in \mathbb{N}\), write  
$
\alpha_n z = \frac{a_n z + b_n}{c_n z + d_n}
$
with \(a_n d_n - b_n c_n = 1\).  
We necessarily have \(c_n \neq 0\), otherwise \(\alpha_n(\infty) = \infty\), which would imply that \(\infty\) is a horocyclic or parabolic limit point, which is excluded.  
Thus, we can write  
$
\alpha_n(\infty) = \frac{a_n}{c_n}.
$
Since \( \operatorname{Im}(\alpha_n i) = \frac{1}{c_n^2 + d_n^2} \in (a,b) \), the sequences \((|c_n|)_n\) and \((|d_n|)_n\) are bounded.  
By extracting subsequences if necessary, we have \(c_n \to c\) and \(d_n \to d\), with  
$
0 < \frac{1}{b} \leq c^2 + d^2 \leq \frac{1}{a}.
$  
Let \((\alpha_n)\) be this subsequence such that \(c_n \to c\), \(d_n \to d\), and \(\alpha_n i \to \infty\) with \(a \leq \operatorname{Im}(\alpha_n i) \leq b\).  
We have the real part of \(\alpha_n i\):  
$
\operatorname{Re}(\alpha_n i) = \frac{a_n}{c_n} - \frac{d_n}{c_n (c_n^2 + d_n^2)}.
$
\begin{enumerate}
    \item If \(c_n \to c \in \mathbb{R}^*\), then  
    $
    \frac{d_n}{c_n (c_n^2 + d_n^2)} \to \frac{d}{c(c^2 + d^2)} \in \mathbb{R},
    $
    and since \(\alpha_n i \to \infty\) and \(\operatorname{Im}(\alpha_n i) \in (a,b)\), it follows that  
    $
    \operatorname{Re}(\alpha_n i) \to \infty,
    $
    thus \(\alpha_n(\infty) \to \infty\).
    
    \item If \(c_n \to 0\),  
    suppose that the sequence \((a_n)_n\) admits a subsequence converging to 0. Denote this subsequence again by \((a_n)_n\). Then  
    $
    \alpha_n^{-1} i = \frac{1}{a_n^2 + c_n^2} \to \infty.
    $
    Thus, for every \( B > 0 \), there exists \( N \in \mathbb{N} \) such that for all \( n \geq N \),  
$
\operatorname{Im}(\alpha_n^{-1} i) > B,
$  
which implies that  
$
\alpha_n^{-1} i \in \operatorname{Int}(O_{\infty}(iB)).
$  
Consequently, we have  
$\# \big(\Gamma i \cap \operatorname{Int}(O_{\infty}(iB))\big) = +\infty,
$  
so that \(\infty \in \Lambda_h\), which is absurd. Therefore, the sequence \((a_n)_n\) admits a subsequence converging to \( l \in \mathbb{R}^* \cup \{\infty\} \).
\end{enumerate}

\end{proof}

From points on the boundary, we can define a geometric invariant that plays a very important role.
\begin{dfn}
Let \( a, b, c, d \) be four distinct points on the boundary at infinity of \( \mathbb{H} \).  
The \emph{cross-ratio} of these points is the quantity defined by:
\[
[a; b; c; d] = \frac{(a - c)(b - d)}{(a - d)(b - c)}.
\]
\end{dfn}

\begin{lem}[\cite{Beardon1983}]\label{QI}
Let \( (a, b) \) and \( (c, d) \) be two geodesics intersecting at a point \( x \).  
If the boundary points are ordered as \( (a; c; b; d) \), then the angle \( \beta \in [0, \pi] \) between the geodesics \( (a, b) \) and \( (c, d) \) is given by:
\[
[a, c, d, b] = \frac{\cos(\beta) + 1}{2}.
\]
\end{lem}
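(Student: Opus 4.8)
The plan is to exploit the fact that both sides of the claimed identity are invariant under the isometry group of \( \mathbb{H} \). The cross-ratio \( [a,c,d,b] \) is a classical Möbius invariant, and the angle \( \beta \) between two geodesics is preserved by orientation-preserving isometries, since these act on \( \mathbb{H} \) as conformal automorphisms in \( PSL(2,\mathbb{R}) \) and on \( \partial\mathbb{H} \) as orientation-preserving homeomorphisms of the boundary circle. Hence it suffices to verify the identity for one convenient normalization of the configuration and then invoke invariance to conclude in general.

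First I would apply an element of \( PSL(2,\mathbb{R}) \) carrying the geodesic \( (a,b) \) onto the imaginary axis, with \( a \mapsto 0 \) and \( b \mapsto \infty \), and the intersection point \( x \mapsto i \); such a map exists because \( PSL(2,\mathbb{R}) \) acts transitively on pointed geodesics. Since these maps preserve the cyclic order of \( \partial\mathbb{H} \), the hypothesis on the ordering \( (a;c;b;d) \) guarantees that the images \( c', d' \) of \( c, d \) satisfy \( c' > 0 > d' \), and the geodesic \( (c',d') \) still passes through \( i \). Requiring that the Euclidean semicircle with real endpoints \( c', d' \) pass through \( i \) forces \( c'd' = -1 \), so I may write \( c' = t \) and \( d' = -1/t \) with \( t > 0 \). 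A direct evaluation of the Möbius-invariant cross-ratio then gives \( [a,c,d,b] = [0,t,-1/t,\infty] = \frac{1}{1+t^2} \).

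It remains to compute the angle in these coordinates. The tangent to the imaginary axis at \( i \) is vertical, while the tangent to the semicircle \( (c',d') \) at \( i \) is orthogonal to the radius joining \( i \) to the Euclidean center \( \frac{t - 1/t}{2} \) on \( \mathbb{R} \). A short computation of the inner product of these two tangent vectors yields \( \cos\beta = \frac{1-t^2}{1+t^2} \), whence \( \frac{\cos\beta + 1}{2} = \frac{1}{1+t^2} \), which matches the cross-ratio computed above. By the invariance noted at the outset, the identity holds for the original configuration as well.

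The delicate point, which I expect to be the main obstacle, is the orientation bookkeeping: the naive tangent computation produces \( \pm\cos\beta \), so I must check carefully that the prescribed cyclic ordering \( (a;c;b;d) \) is precisely what selects the angle \( \beta \in [0,\pi] \) (rather than its supplement \( \pi - \beta \)) for which the stated sign is correct. A useful sanity check is the symmetric case \( t = 1 \), where \( (c',d') = (1,-1) \) is the unit semicircle meeting the imaginary axis orthogonally; there \( \beta = \pi/2 \) and both sides equal \( \frac{1}{2} \), confirming the normalization.
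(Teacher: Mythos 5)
Your proposal is correct, but note that the paper itself offers no proof of this lemma: it is quoted directly from Beardon's \emph{The Geometry of Discrete Groups}, so there is no internal argument to compare against. Your normalization argument is the standard way to establish it, and it checks out under the paper's cross-ratio convention \( [a;b;c;d] = \frac{(a-c)(b-d)}{(a-d)(b-c)} \): with \( a\mapsto 0 \), \( b\mapsto\infty \), \( x\mapsto i \), the ordering \( (a;c;b;d) \) forces \( c'=t>0 \), \( d'=-1/t \), and one gets \( [a,c,d,b] = \frac{(a-d)(c-b)}{(a-b)(c-d)} = \frac{1}{1+t^2} \), while the tangent to the semicircle at \( i \), oriented from \( c' \) to \( d' \), is proportional to \( (-1,-m) \) with \( m=\frac{t^2-1}{2t} \), giving \( \cos\beta = \frac{-m}{\sqrt{1+m^2}} = \frac{1-t^2}{1+t^2} \) against the upward tangent of the imaginary axis, so both sides equal \( \frac{1}{1+t^2} \). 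The one point you flagged but did not fully execute, the orientation bookkeeping, does work out exactly as you suspected: the cyclic ordering of the endpoints is what selects the oriented tangents (from \( a \) to \( b \), from \( c \) to \( d \)), and since \( \beta \mapsto \frac{\cos\beta+1}{2} \) is a bijection from \( [0,\pi] \) onto \( [0,1] \), the identity determines \( \beta \) unambiguously. In effect your write-up supplies a self-contained proof where the paper relies on an external reference, which is a reasonable thing to do, though for a cited classical fact a reader would also accept the citation alone.
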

\begin{lem}\label{TAHA2}

Let $(a,b,c,d)$ be four points at infinity such that $(a,b)$ is not interlaced with $(c,d)$ and $(a,c,d,b)$ is ordered.  
Then there exists a unique geodesic $\gamma$ orthogonal to both $(a,b)$ and $(c,d)$.  

Moreover, we have:
\begin{equation}
  [a,c,d,b] \;=\; \tfrac{1}{2} \cosh^{2}\!\left(\tfrac{d(x,y)}{2}\right),
\end{equation}
\begin{equation}
  [a,b,c,d] \;=\; \tanh^{2}\!\bigl(d(x,y)\bigr),
\end{equation}
where $x$ is the intersection of $\gamma$ with $(a,b)$ and $y$ is the intersection of $\gamma$ with $(c,d)$.

\end{lem}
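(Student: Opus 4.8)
The plan is to reduce both identities to a single one-variable computation by normalizing the common perpendicular, after first settling the existence and uniqueness of $\gamma$.

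For existence and uniqueness, I would argue as follows. Since $(a,b)$ is not interlaced with $(c,d)$, the closures of the two geodesics in $\overline{\mathbb H} = \mathbb H \cup \partial\mathbb H$ are disjoint, i.e. they are ultraparallel. Applying the isometry of $\mathbb H$ that sends $(a,b)$ to the imaginary axis $(0;\infty)$, the geodesic $(c,d)$ becomes a Euclidean semicircle orthogonal to $\mathbb R$ with center $p \in \mathbb R$ and radius $s$; the non-interlacing forces $c,d$ to have the same sign, hence $p^2 > s^2$. A geodesic orthogonal to the imaginary axis is exactly a semicircle centered at $0$, and such a semicircle of radius $\rho$ is orthogonal to $(c,d)$ if and only if $p^2 = \rho^2 + s^2$. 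Since $p^2 - s^2 > 0$, this equation has a unique positive root $\rho$, which produces the unique common perpendicular $\gamma$.

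For the two cross-ratio identities, I would instead place $\gamma$ itself on the imaginary axis. Then $(a,b)$ and $(c,d)$ are both semicircles centered at the origin, of radii $r_1 > r_2 > 0$, so that, respecting the order $(a,c,d,b)$, one has $a = -r_1$, $c = -r_2$, $d = r_2$, $b = r_1$, while $x = i r_1$ and $y = i r_2$ are the feet of $\gamma$ on the two geodesics. The distance along the imaginary axis is $d(x,y) = \log(r_1/r_2)$, so with $t := e^{d(x,y)} = r_1/r_2$ each cross-ratio becomes a rational function of $t$ alone. Substituting these endpoints into $[a;c;d;b] = \frac{(a-d)(c-b)}{(a-b)(c-d)}$ and into $[a;b;c;d] = \frac{(a-c)(b-d)}{(a-d)(b-c)}$ reduces the claims to single-variable identities in $t$, which I would then match to the right-hand sides of the stated equalities (1) and (2) using the hyperbolic half-angle relations $\cosh^2(\tfrac{\ell}{2}) = \tfrac{\cosh \ell + 1}{2}$ and $\tanh^2(\tfrac{\ell}{2}) = \tfrac{\cosh \ell - 1}{\cosh \ell + 1}$, with $\ell = d(x,y)$; this is the ultraparallel counterpart of the intersecting case recorded in Lemma \ref{QI}, obtained formally by the substitution $\beta \mapsto i\,d(x,y)$.

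The main obstacle I expect is precisely this final matching step. The existence/uniqueness argument and the reduction to a function of $t$ are routine, but the two identities are numerically sensitive to factors of $2$ both inside and outside the hyperbolic functions, so the hard part will be tracking the cross-ratio convention and the normalization constants with enough care to confirm that the reduced expressions in $t = e^{d(x,y)}$ coincide exactly with the closed forms asserted in (1) and (2). That bookkeeping is where the normalization must be pinned down unambiguously.
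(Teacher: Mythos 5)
The paper states Lemma \ref{TAHA2} without any proof, so there is no argument of the paper to compare yours against; on its own terms, your strategy is the standard and correct one. The existence/uniqueness part (normalize $(a,b)$ to the imaginary axis, write $(c,d)$ as a semicircle of center $p$ and radius $s$ with $p^2>s^2$, and solve $\rho^2=p^2-s^2$) is fine, as is the second normalization placing $\gamma$ on the imaginary axis. The genuine gap is exactly the step you defer: the "matching" of the reduced rational functions of $t=e^{d(x,y)}$ to the right-hand sides of (1) and (2) fails, because the lemma as printed is false. Completing your own substitution with $a=-r_1$, $c=-r_2$, $d=r_2$, $b=r_1$ and the paper's convention $[p;q;r;s]=\frac{(p-r)(q-s)}{(p-s)(q-r)}$ gives
\[
[a,c,d,b]=\frac{(a-d)(c-b)}{(a-b)(c-d)}=\frac{(r_1+r_2)^2}{4r_1r_2}=\frac{(t+1)^2}{4t}=\cosh^{2}\!\Bigl(\tfrac{d(x,y)}{2}\Bigr),
\qquad
[a,b,c,d]=\Bigl(\frac{t-1}{t+1}\Bigr)^{2}=\tanh^{2}\!\Bigl(\tfrac{d(x,y)}{2}\Bigr).
\]
So (1) holds \emph{without} the prefactor $\tfrac12$ (equivalently $[a,c,d,b]=\tfrac12\bigl(\cosh d(x,y)+1\bigr)$; the printed formula apparently conflates these two equivalent forms), and in (2) the argument of $\tanh$ must be $d(x,y)/2$, not $d(x,y)$.

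Two quick checks would have exposed this, and a complete write-up must include them rather than leave the crux as anticipated "bookkeeping". First, as $d(x,y)\to 0$ one has $c\to a$ and $d\to b$, so $[a,c,d,b]\to 1$, whereas the printed right-hand side of (1) tends to $\tfrac12$. Second, your own heuristic $\beta\mapsto i\,d(x,y)$ applied to Lemma \ref{QI} yields $\frac{\cos(i\ell)+1}{2}=\frac{\cosh\ell+1}{2}=\cosh^{2}(\ell/2)$, again with no extra $\tfrac12$. Note also that the six values taken by the cross-ratio of these four points under permutations are exactly $\cosh^{2}u$, $\operatorname{sech}^{2}u$, $-\sinh^{2}u$, $-\operatorname{csch}^{2}u$, $\tanh^{2}u$, $\coth^{2}u$ with $u=d(x,y)/2$, so no alternative cross-ratio convention can produce $\tanh^{2}(d(x,y))$: the discrepancy is intrinsic, not a normalization issue. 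In short, your plan is sound and the computation is two lines, but as it stands your argument proves the corrected identities, not the statement as printed; the statement itself needs the corrections above.
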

\begin{thm}\label{GB}
    Let \(P\) be a hyperbolic polygon with \(n\) sides and interior angles \(\alpha_{1}, \alpha_{2}, \ldots, \alpha_{n}\). Then the area of this polygon is given by:
    \[
    \text{Area}(P) = \pi(n - 2) - \sum_{i=1}^{n} \alpha_{i}.
    \]
\end{thm}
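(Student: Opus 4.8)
The plan is to reduce the general polygon to the case of a triangle by triangulation, and to establish the triangle case by a direct area computation in the upper half-plane model, passing through triangles with an ideal vertex. Throughout I would use that the hyperbolic area element in the upper half-plane is $dA = y^{-2}\,dx\,dy$ and that the model is conformal, so that hyperbolic angles coincide with Euclidean angles. \textbf{Reduction to triangles:} first I would fix one vertex $V_1$ of the polygon $P$ and draw the diagonals $V_1V_3, V_1V_4, \dots, V_1V_{n-1}$, decomposing $P$ into the $n-2$ triangles $V_1V_2V_3, V_1V_3V_4, \dots, V_1V_{n-1}V_n$. Since area is additive over this decomposition and every triangle has its vertices among those of $P$, the interior angles of all the triangles together partition exactly the interior angles $\alpha_1,\dots,\alpha_n$ of $P$. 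Hence, granting the triangle formula $\mathrm{Area}(T) = \pi - (\text{angle sum of }T)$, summing over the $n-2$ triangles gives $\mathrm{Area}(P) = (n-2)\pi - \sum_{i=1}^n \alpha_i$, which is the claim.

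\textbf{The singly-asymptotic triangle.} The analytic heart is the triangle with one ideal vertex. I would normalize it by an isometry so that the ideal vertex is at $\infty$; then its two sides issuing from $\infty$ become vertical lines $x=a$ and $x=b$, and the remaining side is an arc of the unit circle $x^2+y^2=1$. Integrating, $\mathrm{Area} = \int_a^b \int_{\sqrt{1-x^2}}^{\infty} y^{-2}\,dy\,dx = \int_a^b (1-x^2)^{-1/2}\,dx = \arcsin b - \arcsin a$. Writing $a=\cos\theta_a$ and $b=\cos\theta_b$ and using conformality to identify the interior angles $\alpha,\beta$ at the two finite vertices with the Euclidean angles between the verticals and the circular arc, I would check that $a=-\cos\alpha$ and $b=\cos\beta$, which yields $\mathrm{Area} = \pi - \alpha - \beta$; since the angle at $\infty$ is $0$, this is exactly the defect formula in this case.

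\textbf{The general triangle.} For an arbitrary triangle $ABC$ with angles $\alpha,\beta,\gamma$, I would extend the side $CA$ beyond $A$ to its ideal endpoint $\omega$ and compare the two singly-asymptotic triangles $BC\omega$ and $AB\omega$, which glue along $AB$ so that $\mathrm{Area}(ABC) = \mathrm{Area}(BC\omega) - \mathrm{Area}(AB\omega)$. Using the previous step together with the supplementarity $\angle BA\omega = \pi-\alpha$ and the angle additivity $\angle CB\omega = \beta + \angle AB\omega$ at $B$, the unknown angle $\angle AB\omega$ cancels and I obtain $\mathrm{Area}(ABC)=\pi-\alpha-\beta-\gamma$, completing the triangle case and hence, via the reduction above, the theorem.

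\textbf{Main obstacle.} I expect the delicate point to be not the integration itself but the angle bookkeeping: in the singly-asymptotic step one must correctly orient the tangent to the circular arc in order to read off $\alpha$ and $\beta$ from $\theta_a,\theta_b$, and in the subtraction step one must verify that the configuration genuinely gives $\angle CB\omega = \beta + \angle AB\omega$ (rather than a difference), so that the two asymptotic areas combine with the correct signs. Checking these angle relations uniformly, for all admissible positions of the ideal point $\omega$, is where the argument requires the most care.
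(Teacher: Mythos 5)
Your proposal is correct and is the classical defect-formula argument; note that the paper itself states Theorem \ref{GB} with no proof at all, recalling it as a known fact (it is the hyperbolic Gauss--Bonnet theorem, for which the paper's bibliography points to Beardon's book), so you are supplying the standard textbook proof rather than paralleling anything in the text. Your analytic core is sound: the integration for the singly-asymptotic triangle, the identifications $a=-\cos\alpha$, $b=\cos\beta$, and the cancellation of the auxiliary angle $\angle AB\omega$ in the subtraction step all check out, and these are exactly the steps in the canonical proof. The one place where your argument is weaker than the statement as written is the reduction step: the fan triangulation by the diagonals $V_1V_3,\dots,V_1V_{n-1}$ requires every such diagonal to lie inside $P$, which holds when $P$ is convex (or at least star-shaped with respect to $V_1$), but not for an arbitrary simple hyperbolic polygon. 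For the general case you should instead induct on $n$: every simple polygon with $n\geq 4$ sides admits a diagonal splitting it into two polygons with $n_1+n_2=n+2$ sides in total, and both area and the quantity $\pi(n-2)-\sum_i\alpha_i$ are additive under such a cut, so the triangle case propagates to all simple polygons. This is a minor repair, and since the polygon to which the paper actually applies the theorem (the quadrilateral $P_{ABCD}$ bounded by four geodesics in the proof of Theorem \ref{pa}) is convex, your fan decomposition already suffices for the paper's purposes.
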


\section{Proof of Theorem \ref{pa}}

Let \( F \) be a untwisted flute and let \( \Gamma = \langle \gamma_n; \, n \in \mathbb{N} \rangle \) be a geometrically infinite Fuchsian group.  
Let \( u \in T^1F \) such that the horocyclic orbit \( h_{\mathbb{R}}u \) is neither closed nor dense.  
According to the characterization by Dal'bo and Starkov (see \cite{Starkov1995}), the positive endpoint of a lift of \( u \), denoted by \( \widetilde{u}(\infty) \), belongs to \( \Lambda_{\mathrm{irr}} \).  

We now rely on the following two lemmas:

Assuming that none of the closed geodesics $\alpha^{*}_n$ intersect $\tilde{u}(\mathbb{R}^+)$, and without loss of generality, let us set $(-\alpha; +\alpha)$ as the common orthogonal to the axes $\alpha_n^{*}$, with $\alpha > 0$.
By Lemmas \ref{T}, \ref{A}, and \ref{H}, we can extract a subsequence $(\gamma_{n})\subset \Gamma$, which we will still denote by \(\gamma_n\), such that their coefficients \((a_n, b_n, c_n, d_n)\) satisfy the following relations:
\begin{enumerate}
    \item \(a_n \rightarrow \frac{1}{d}, \quad b_n \rightarrow 0, \quad c_n \rightarrow 0, \quad d_n \rightarrow d \neq 0.\)
    \item \(a_n \rightarrow \frac{1 + \alpha^2 c^2}{d}, \quad b_n \rightarrow -\alpha^2 c, \quad c_n \rightarrow c \neq 0, \quad d_n \rightarrow d \neq 0.\)
    \item \(a_n \rightarrow ?, \quad b_n \rightarrow -\alpha^2 c, \quad c_n \rightarrow c \neq 0, \quad d_n \rightarrow 0.\)
\end{enumerate}
\textbf{First case}:\\
Suppose the group coefficients satisfy: 
\[ a_n \rightarrow \frac{1}{d}, \quad b_n \rightarrow 0, \quad c_n \rightarrow 0, \quad d_n \rightarrow d \neq 0. \]
Using the fact that the group is discrete and applying the proposition on horocyclic convergence, that is (see \textbf{\cite{BellisThesis}}, Chapter 2, Proposition 2.3.2), we have \(\gamma_n(\infty) \rightarrow \infty\) and \(B_{\infty}(\gamma_n^{-1}(i); i) \rightarrow 0\). 
Thus, we deduce that the orbit is recurrent according to Corollary \ref{MM} and therefore \(T_u = \{0\}.\)\\
\textbf{Second case}: \\
Suppose that 
\[
a_n \rightarrow \frac{1+\alpha^{2} c^{2}}{d}, \quad b_n \rightarrow -\alpha^{2} c, \quad c_n \rightarrow c \neq 0, \quad d_n \rightarrow d \neq 0,
\]
then the sequence of isometries \(\gamma_n\) converges to an element of \(\Gamma\). This is absurd since \(\Gamma\) is discrete.\\
\textbf{Third case} \\
Suppose that these subsequences \((a_n)_n\), \((b_n)_n\), \((c_n)_n\) and \((d_n)_n\) satisfy condition (3). \\ 
\begin{enumerate}
    \item If \(\mathrm{Inj} u ( \mathbb{R}^+ ) < \infty\), then the sequence \((a_n)_n\) converges to some \(a\). This again contradicts the assumption that \(\Gamma\) is discrete.
    \item If \(\mathrm{Inj} u ( \mathbb{R}^+ ) = +\infty\), then the limit of the translation lengths of the axes of \(\gamma_n\) is infinite, that is 
    \[
    \lim_{n \to \infty} l(\gamma_n) = +\infty,
    \]
    and thus, according to Proposition $1.1.2$ in \cite{BellisThesis}, we deduce that the sequence \((a_n)_n\) tends to \(+\infty\).
\end{enumerate}
Hence, we have:
\[
a_n \to +\infty, \quad b_n \to -\alpha^2 c, \quad c_n \to c \neq 0, \quad d_n \to 0.
\]
Let us consider the unique geodesic $(\beta_{n},\frac{x_{n}}{2})$ passing in $\frac{x_{n}}{2}$ and orthogonal to the axis of $\gamma_n$ which denote here by $(y_{n},x_{n})$. By applying the lemma \ref{QI}, we can get $\beta_{n}=\frac{-x_{n}^{2}}{2y_{n}}+\frac{3x_{n}}{2}$. Let $\theta_n$ be the angle between the geodesics $(\beta_{n},\frac{x_{n}}{2})$ and $(0;\infty).$ 
The points \(A\), \(B\), \(C\) and $D$ denote the intersection points respectively of the following geodesics: ($(\beta_{n}; \frac{x_{n}}{2})$ and $(0; \infty)$), ($(-\beta_{n}; +\frac{x_{n}}{2})$ and $(\gamma_{n}^{-}; \gamma_{n}^{+})$), and ($(-\alpha; +\alpha)$ and $(0; \infty)$) and  ($(-\alpha; +\alpha)$ and $(\gamma_{n}^{-}; \gamma_{n}^{+})$). (See \ref{III})

\begin{figure}\label{III}
  \centering
\includegraphics[width=1\linewidth]{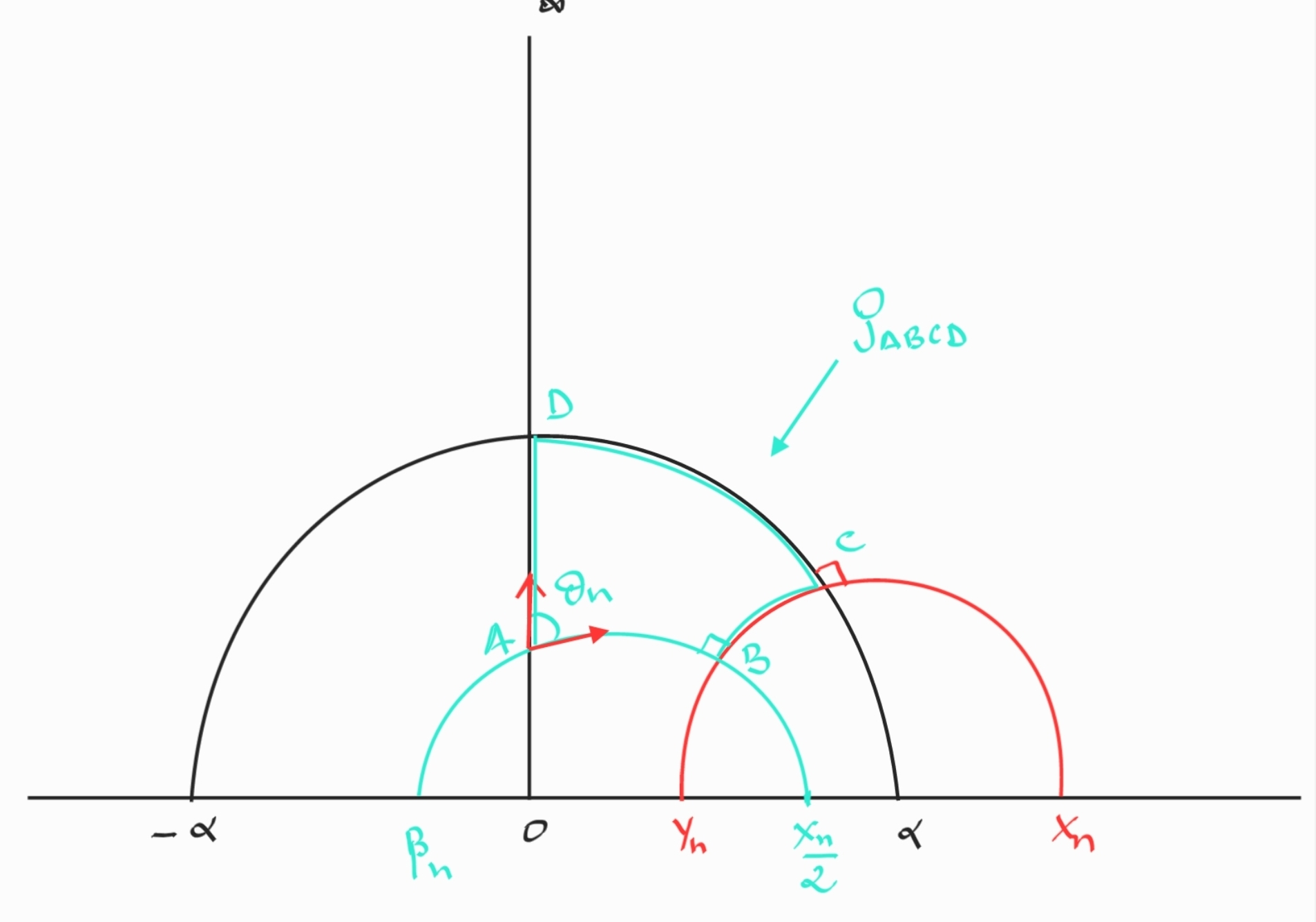}
    \caption{Hyperbolic polygon}

\end{figure}

Applying the Gauss-Bonnet theorem \ref{GB} to this hyperbolic polygon  $P_{ABCD}$ (see Figure \ref{III}), we obtain $\theta_n \leq \frac{\pi}{2},  \forall n \in \mathbb{N}.$

By applying Lemma \ref{QI} to the oriented geodesics \((\beta_{n}, \frac{x_{n}}{2})\) and $((0; +\infty)$, we obtain:
$
[\beta_{n}; 0; \infty; \frac{x_{n}}{2}] = \frac{1 + \cos(\theta_n)}{2}.
$
Letting \(n \to +\infty\), it follows that \(\theta_n \to \pi\), since \(\gamma_n^{-} \to 0\) and \(\gamma_n^{+} \to +\infty\).

\section{Proof of Theorem \ref{TAHA}}

\subsection{Presentation of the Group and Construction of the Untwisted Flute}

We consider the Poincaré half-plane $\mathbb{H}$ endowed with its hyperbolic metric.\\
Let $H_1$ be the horocycle centered at $\infty$ and passing through the point $i$.\\
Let $\tilde{A}$ denote the geodesic with endpoints $(-1, 1)$, and let $R$ be the reflection with respect to this geodesic.\\ 
Let $H_2$ be the image of $H_{\infty}(i) = H_1$ under the reflection with axis $\tilde{A}$.\\

\begin{figure}
    \centering
    \includegraphics[width=0.9\linewidth]{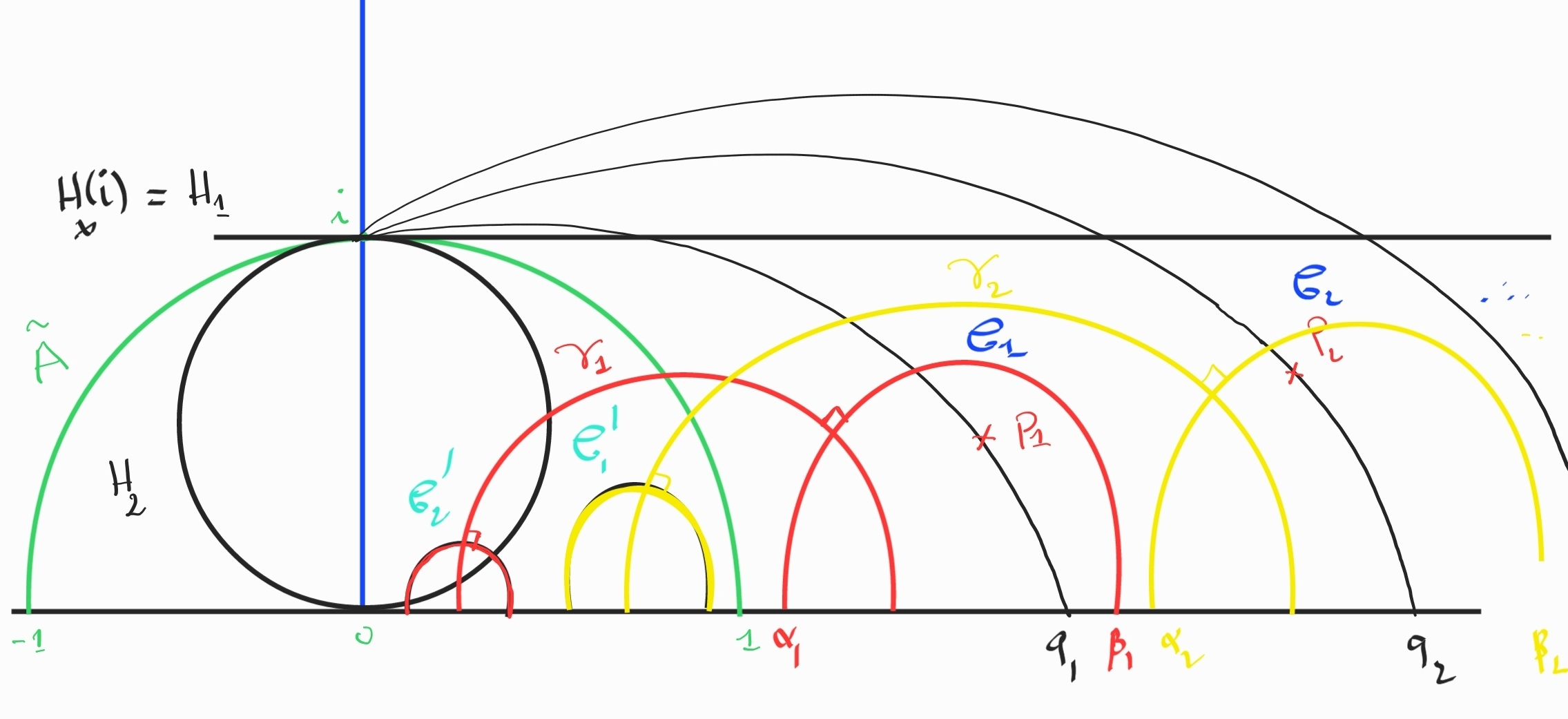}
    \caption{Fundamental domain.}

\end{figure}

Next, consider a sequence of geodesic half-rays $\left(g_n\right)_{n \in \mathbb{N}}$, all starting from the point $i$, with endpoints at infinity $\xi_n > 0$ satisfying $\xi_n \to +\infty$ as $n \to \infty$.\\
We also fix a sequence $\left(\epsilon_n\right)_{n \in \mathbb{N}}$ of positive real numbers converging to $0$.\\

Let $\left(p_n\right)_{n \in \mathbb{N}}$ be a sequence of points such that, for each $n$, the point $p_n$ lies on $g_n$ and satisfies 
\[ B_{\infty}(p_n, i) = \epsilon_n. \]
From this relation we deduce that $p_n = e^{-\epsilon_n}$, that is, $p_n = 1 - \epsilon_n + O(\epsilon_n)$.\\
We therefore set $I_n = 1 - \epsilon_n$, $\mathrm{Im}(p_n) = I_n$, and $Y_n = \mathrm{Re}(p_n)$. 
Let $C_n$ denote the perpendicular bisector of the segment $[i, p_n]$, namely,
\[
C_n = \{ z \in \mathbb{H} \mid d(z, i) = d(z, p_n) \}.
\]
Explicitly, $C_n$ is a circle with center $X_n = \dfrac{Y_n}{I_n - 1}$ and Euclidean radius 
\[
R_n = \sqrt{I_n} \left( 1 + \frac{Y_n^2}{(I_n - 1)^2} \right)^{1/2}.
\]

\begin{prp}
Let $(\alpha_n, \beta_n)$ be the geodesic corresponding to the perpendicular bisector of the segment $[i, p_n]$, where $\alpha_n$ and $\beta_n$ denote its negative and positive endpoints, respectively. Then
\[
\lim_{n \to +\infty} \alpha_n = \lim_{n \to +\infty} \beta_n = +\infty.
\]
\end{prp}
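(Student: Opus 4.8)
The plan is to reduce the statement to two explicit asymptotic computations — the imaginary part $I_n$ and the real part $Y_n$ of $p_n$ — and then to track the two endpoints $X_n \pm R_n$ of the bisecting geodesic $C_n$ as $n \to \infty$. First I would pin down the coordinates of $p_n$. From $B_{\infty}(p_n, i) = \epsilon_n$ the imaginary part is $I_n = e^{-\epsilon_n}$, so $I_n \to 1^{-}$ and $1 - I_n \to 0^{+}$. For the real part I would use that $p_n$ lies on the geodesic ray $g_n$ issuing from $i$ with endpoint $\xi_n > 0$: the complete geodesic is the Euclidean semicircle orthogonal to $\mathbb{R}$ through $i$ with endpoints $-1/\xi_n$ and $\xi_n$, i.e.\ centered at $c_n = (\xi_n - \xi_n^{-1})/2$ with radius $r_n = (\xi_n + \xi_n^{-1})/2$ (note $r_n^2 = c_n^2 + 1$). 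Intersecting this circle with the horizontal line $\mathrm{Im} = I_n$ and choosing the branch on the $\xi_n$-side of the apex (the side the ray reaches after passing the top, since $I_n < 1$) gives $Y_n = c_n + \sqrt{c_n^2 + 1 - I_n^2}$. Because $\xi_n \to +\infty$ forces $c_n \to +\infty$, this yields $Y_n \to +\infty$; this is the only place the hypothesis $\xi_n \to +\infty$ enters.

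Next I would substitute into the formulas for $X_n$ and $R_n$. Writing the two endpoints as $X_n \pm R_n$ and factoring out $\frac{1}{1 - I_n} > 0$, they take the form
$$X_n \pm R_n = \frac{1}{1 - I_n}\Big(Y_n \pm \sqrt{I_n}\,\sqrt{(1-I_n)^2 + Y_n^2}\Big).$$
(One must be careful with the sign in the center: the equidistance condition $I_n|z-i|^2 = |z - p_n|^2$ places the center at $Y_n/(1 - I_n)$, so that both endpoints carry the same overall sign.) The outer endpoint $\beta_n = X_n + R_n$ then immediately tends to $+\infty$, since its numerator is $\ge Y_n \to +\infty$ while $1 - I_n \to 0^{+}$.

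The main obstacle is the inner endpoint $\alpha_n = X_n - R_n$, which is an indeterminate form (an $\infty - \infty$ cancellation inside the bracket): the leading terms $Y_n$ and $\sqrt{I_n}\,\sqrt{(1-I_n)^2 + Y_n^2}$ nearly coincide. I would resolve this by rationalizing — multiplying numerator and denominator by the conjugate — which collapses the $(1-I_n)$ factors and produces
$$\alpha_n = \frac{Y_n^2 - I_n(1 - I_n)}{Y_n + \sqrt{I_n}\,\sqrt{(1-I_n)^2 + Y_n^2}}.$$
Since $I_n \to 1$ and $Y_n \to +\infty$, the numerator is $\sim Y_n^2$ and the denominator is $\sim 2Y_n$, whence $\alpha_n \sim Y_n/2 \to +\infty$.

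Combining the two estimates gives $\lim_{n} \alpha_n = \lim_{n} \beta_n = +\infty$, as claimed. Everything besides the cancellation in $\alpha_n$ is routine bookkeeping of the explicit coordinates $(Y_n, I_n)$; the one genuinely delicate point is verifying that, after rationalization, $\alpha_n$ diverges rather than settling at a finite value, which is exactly where the growth $Y_n \to +\infty$ (and hence $\xi_n \to +\infty$) is used a second time.
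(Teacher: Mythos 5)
Your proof is correct, and although it uses the same computational framework as the paper (explicit center and radius of the bisecting circle $C_n$, then asymptotics of the two endpoints), it is genuinely more complete on the one point that matters. The paper's own proof analyzes only a single endpoint: its displayed expression $-\frac{Y_n}{I_n-1}+\sqrt{I_n}\bigl(1+\frac{Y_n^2}{(I_n-1)^2}\bigr)^{1/2}$ is, once the sign of the center is fixed (as you observe, the center is $\frac{Y_n}{1-I_n}$, not $\frac{Y_n}{I_n-1}$), the \emph{outer} endpoint $X_n+R_n=\beta_n$, even though the paper labels it $\alpha_n=X_n-R_n$; the paper then concludes for the second endpoint via the inequality $\alpha_n\le\beta_n$, which runs in the wrong direction given what was actually computed, so the inner endpoint --- the indeterminate $\frac{\infty-\infty}{0^+}$ form --- is never controlled there. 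Your rationalization, yielding
\[
\alpha_n=\frac{Y_n^2-I_n(1-I_n)}{Y_n+\sqrt{I_n}\sqrt{(1-I_n)^2+Y_n^2}}\sim\frac{Y_n}{2}\to+\infty,
\]
supplies exactly this missing step. In addition, you justify $Y_n\to+\infty$ from the hypothesis $\xi_n\to+\infty$ by locating $p_n$ on the semicircle with endpoints $-1/\xi_n$ and $\xi_n$ (namely $Y_n=c_n+\sqrt{c_n^2+1-I_n^2}$ with $c_n=(\xi_n-\xi_n^{-1})/2$), whereas the paper uses this growth tacitly in its final asymptotic without establishing it. So your proposal not only proves the proposition; it repairs the paper's own argument.
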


\begin{proof}
Let $Y_n$ and $I_n$ be the real and imaginary parts of $p_n$, respectively.\\
The equation of the perpendicular bisector $C_n$ of $[i, p_n]$ is given by
\[
I_n |z - i|^2 = |z - p_n|^2.
\]
Writing \( z = x + i y \), we obtain
\[
(I_n - 1)x^2 + (I_n - 1)y^2 + 2 Y_n x = I_n^2 - I_n + Y_n^2.
\]
Hence,
\[
\left(x + \frac{Y_n}{I_n - 1}\right)^2 + y^2 = I_n + \frac{Y_n^2}{(I_n - 1)^2},
\]
which shows that $C_n$ is a circle centered at $X_n$ with radius $R_n$, as stated.\\

By definition,
\[
\alpha_n = X_n - R_n = -\frac{Y_n}{I_n - 1} + \sqrt{I_n} \left( 1 + \frac{Y_n^2}{(I_n - 1)^2} \right)^{1/2}
= \frac{Y_n + \sqrt{I_n}\sqrt{(I_n - 1)^2 + Y_n^2}}{I_n - 1}.
\]
This last expression is asymptotically equivalent to $\dfrac{(1 + I_n) Y_n}{I_n - 1}$.\\
We thus conclude that $\lim_{n \to +\infty} \alpha_n = +\infty$, and since $\alpha_n \leq \beta_n$, the same holds for $\beta_n$.
\end{proof}

Since $C_n$ is a circle with center $X_n = \dfrac{Y_n}{I_n - 1}$ and Euclidean radius 
$R_n = \sqrt{I_n}\left( 1 + \dfrac{Y_n^2}{(I_n - 1)^2} \right)^{1/2}$,
there exists $k \in \mathbb{N}$ such that $C_n \cap C_k = \emptyset$. 
Indeed, the circles $C_n$ and $C_k$ are externally disjoint if and only if 
$|X_n - X_k| > R_n + R_k.$\\
Since the sequence $(X_n)_n$ is increasing, defining $K_n = X_n - R_n$, we have $K_{n+1} - K_n > 0.$
We now define the circles $(C'_n)_n$ as the reflections of the circles $(C_n)_n$ across the geodesic $\tilde{A} = (-1,1)$.

\begin{prp}
The sequence $(C'_n)_n$ consists of euclidean circles with centers $X'_n = \dfrac{X_n}{X_n^2 - R_n^2}$ and radiu $K_n = \dfrac{R_n}{|X_n^2 - R_n^2|}.$
\end{prp}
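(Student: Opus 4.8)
The plan is to make the reflection $R$ completely explicit and then reduce the statement to the classical behaviour of a Euclidean circle under inversion. Since $\tilde{A}$ is the geodesic of $\mathbb{H}$ with endpoints $-1$ and $1$, it is the Euclidean upper half-circle $|z|=1$, and the hyperbolic reflection across it is exactly the inversion in the unit circle centred at the origin, namely $R(z)=1/\bar{z}$. My first step is therefore to record this identification, which turns a geometric reflection into a concrete formula.

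Next I would exploit the fact that each $C_n$ is a Euclidean circle whose centre $X_n$ is \emph{real}: indeed $C_n$ is the hyperbolic perpendicular bisector of $[i,p_n]$, hence a geodesic orthogonal to $\mathbb{R}$, so its Euclidean centre lies on the real axis. Consequently $C_n$ is symmetric under complex conjugation $z\mapsto\bar{z}$, and therefore $R(C_n)=\{\,1/\bar{z}\;:\;z\in C_n\,\}=\{\,1/z\;:\;z\in C_n\,\}$ coincides with the image of $C_n$ under the \emph{holomorphic} inversion $z\mapsto 1/z$. This observation disposes of the anti-holomorphic nuisance and lets me work with an honest Möbius transformation.

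I would then compute that image directly. Writing the equation of $C_n$ as $z\bar{z}-X_n(z+\bar{z})+X_n^2-R_n^2=0$, substituting $z=1/w$ and $\bar{z}=1/\bar{w}$, and clearing denominators by $w\bar{w}$, I obtain $w\bar{w}-\frac{X_n}{X_n^2-R_n^2}(w+\bar{w})+\frac{1}{X_n^2-R_n^2}=0$. Recognising this as a circle with real centre $X'_n=\frac{X_n}{X_n^2-R_n^2}$, the radius satisfies $(K_n)^2=|X'_n|^2-\frac{1}{X_n^2-R_n^2}=\frac{X_n^2-(X_n^2-R_n^2)}{(X_n^2-R_n^2)^2}=\frac{R_n^2}{(X_n^2-R_n^2)^2}$, whence $K_n=\frac{R_n}{|X_n^2-R_n^2|}$, which is precisely the asserted formula.

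The only point that genuinely requires care, and which I expect to be the main obstacle, is the non-vanishing of $X_n^2-R_n^2$ used when clearing denominators: this is exactly the condition that $C_n$ does not pass through the origin (equivalently, that $0$ is not an endpoint of the geodesic $C_n$), and it is what guarantees that the image is again a circle rather than a vertical geodesic. Using the explicit values $X_n=\frac{Y_n}{I_n-1}$ and $R_n^2=I_n\bigl(1+\frac{Y_n^2}{(I_n-1)^2}\bigr)$, one finds $X_n^2-R_n^2=-\frac{Y_n^2+I_n^2-I_n}{I_n-1}$, so it suffices to check that $Y_n^2\neq I_n(1-I_n)$ for the parameters at hand; verifying this cleanly, together with keeping the conjugation bookkeeping straight in the reduction to $z\mapsto 1/z$, is the substantive part, the remainder being the routine inversion computation above.
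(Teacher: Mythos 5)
Your proof is correct and follows essentially the same route as the paper: the paper's proof simply invokes ``a straightforward computation of $j(C_n)$'' under the inversion across $\tilde{A}$, and you carry out exactly that computation (identifying the reflection as $z\mapsto 1/\bar z$ and substituting into the circle equation), arriving at the stated center and radius. Your additional observation that one must have $X_n^2\neq R_n^2$ (i.e.\ that $0$ is not an endpoint of the geodesic $C_n$, equivalently $Y_n^2\neq I_n(1-I_n)$, which holds here since $Y_n\to+\infty$ while $I_n(1-I_n)\to 0$) is a point the paper passes over silently, and is worth keeping.
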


\begin{proof}
It suffices to note that the image of a circle under reflection about a circle is again a circle. 
A straightforward computation of $j(C_n) = \{ j(z) \mid z \in C_n \}$ yields the desired result:
\[
X'_n = \frac{X_n}{X_n^2 - R_n^2}
\quad \text{and} \quad
K_n = \frac{R_n}{|X_n^2 - R_n^2|}.
\]
\end{proof}

\begin{prp}{FIN}
There exists a unique hyperbolic isometry $f_n$ such that $f_n(C_n) = C'_n$.
\end{prp}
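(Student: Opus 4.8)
The plan is to realize $f_n$ as a product of two reflections. Write $R$ for the reflection of $\mathbb{H}$ across the geodesic $\tilde A = (-1,1)$, so that $C'_n = R(C_n)$ by construction, and let $\sigma_n$ denote the reflection across the geodesic $C_n$ (recall that $C_n$, being the hyperbolic perpendicular bisector of $[i,p_n]$, is a complete geodesic of $\mathbb{H}$, so this reflection is well defined and fixes $C_n$ pointwise). I would then set $f_n := R \circ \sigma_n$. Since $R$ and $\sigma_n$ are each orientation-reversing isometries, their composition is an orientation-preserving isometry of $\mathbb{H}$, i.e. an element of $\mathrm{PSL}(2,\mathbb{R})$; and because $\sigma_n(C_n) = C_n$ one gets immediately $f_n(C_n) = R(\sigma_n(C_n)) = R(C_n) = C'_n$, which settles existence. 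As a bonus, since $R$ fixes $i$ (as $i \in \tilde A$) and $\sigma_n$ interchanges $i$ and $p_n$, the map $f_n$ sends $p_n \mapsto i$ and $i \mapsto R(p_n)$, exhibiting it as the expected side-pairing generator.

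Next I would verify that $f_n$ is of \emph{hyperbolic} type. The product of two reflections in geodesics of $\mathbb{H}$ is elliptic, parabolic, or hyperbolic according to whether the two geodesics cross, are asymptotic, or are ultraparallel (disjoint in $\overline{\mathbb{H}}$), in which last case it is a translation along their common perpendicular. Hence it suffices to show that $C_n$ and $\tilde A$ are disjoint and share no endpoint. The geodesic $\tilde A$ has endpoints $-1$ and $1$, while by the previous proposition the endpoints $\alpha_n \le \beta_n$ of $C_n$ satisfy $\alpha_n, \beta_n \to +\infty$; more precisely the construction keeps $\alpha_n > 1$, so both endpoints of $C_n$ lie strictly to the right of $1$. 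Thus $\{-1,1\}$ and $\{\alpha_n,\beta_n\}$ are unlinked on $\partial\mathbb{H}$, the two geodesics do not meet, and $f_n = R\circ\sigma_n$ is a hyperbolic translation along their common perpendicular.

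For uniqueness I would argue within the class of side-pairings determined by the construction. An orientation-preserving isometry carrying $C_n$ onto $C'_n$ is determined only up to post-composition with the stabilizer of $C'_n$ in $\mathrm{PSL}(2,\mathbb{R})$, so to single out one map one must also prescribe how the two half-planes bounded by $C_n$ are matched to those bounded by $C'_n$; the natural requirement forced by the flute construction is that $f_n$ send the half-plane bounded by $C_n$ not containing $i$ onto the half-plane bounded by $C'_n$ containing $i$, equivalently that $f_n$ be realized as $R$ composed with the reflection in $C_n$. Since the reflection in a given geodesic is unique and $R$ is fixed, this determines $f_n$ uniquely. \textbf{The main obstacle} I anticipate is precisely this uniqueness clause: the bare condition $f_n(C_n)=C'_n$ leaves a one-parameter (translation) ambiguity, so the statement is correct only once the half-plane pairing is fixed, and I would need to phrase that matching carefully and check that the choice $f_n = R\circ\sigma_n$ indeed realizes it. A secondary point is ensuring $\alpha_n > 1$ for \emph{every} $n$, not merely asymptotically, which is what guarantees hyperbolicity for all $n$ and should be read off from the explicit formula for $\alpha_n$ together with the freedom in choosing $\xi_n$ and $\epsilon_n$.
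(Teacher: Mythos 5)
Your construction coincides with the paper's own: the paper also defines $f_n$ as the composition of the reflection in $C_n$ with the reflection in $\tilde{A}$ (it writes $f_n = j \circ R$, with $j$ the reflection in $\tilde{A}$ and $R$ the reflection in $C_n$, which is exactly your $R \circ \sigma_n$ in different notation), so the existence part is identical. The treatments diverge on the two remaining points. For the type of $f_n$, the paper writes out the explicit M\"obius map $f_n(z) = \frac{z - X_n}{X_n z + R_n^2 - X_n^2}$, normalizes the matrix $M'_n$, and argues that its trace blows up, so $f_n$ is hyperbolic for $n$ large; your route via the classification of products of reflections in two disjoint, non-asymptotic geodesics is more geometric but reaches the same conclusion, and both versions carry the same caveat (the paper only gets $\operatorname{Tr}(M'_n) \geq 5$ for $n \geq N$, while you need $\alpha_n > 1$ for \emph{every} $n$, which depends on the choices of $\xi_n$ and $\epsilon_n$). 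The more substantial difference is uniqueness: you correctly observe that the bare condition $f_n(C_n) = C'_n$ cannot single out $f_n$ --- post-composing with a translation of small translation length along $C'_n$ yields other hyperbolic isometries carrying $C_n$ onto $C'_n$ --- whereas the paper's proof constructs one map and never addresses uniqueness at all. Your half-plane-matching refinement (equivalently, fixing $f_n$ to be the reflection in $\tilde{A}$ composed with the reflection in $C_n$, which forces $f_n^{-1}(i) = p_n$ and hence $\partial \mathbb{H}_i(f_n^{-1}) = C_n$, as the subsequent Schottky construction requires) is the honest way to make the stated uniqueness true, and it repairs a genuine gap in the paper's argument.
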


\begin{proof}
From the above, we have $j(C_n) = C'_n$, where $j$ is the reflection with respect to $\tilde{A}$. 
Let $R$ be the reflection with respect to $C_n$, that is,
\[
R(z) = X_n + \frac{R_n^2}{\overline{z - X_n}}.
\]
Composing both sides of $j(C_n) = C'_n$ with $R$, we obtain
\[
f_n = j \circ R, \quad \text{so that} \quad f_n(C_n) = C'_n.
\]
Explicitly,
\[
f_n(z) = j \circ R(z) = \frac{z - X_n}{X_n z + R_n^2 - X_n^2}.
\]
Let $M_n$ denote the matrix associated with this real Möbius transformation. 
Its trace equals $R_n^2$.\\
After normalization, the associated matrix becomes
\[
M'_n =
\begin{pmatrix}
\dfrac{1}{R_n} & \dfrac{-X_n}{R_n} \\[4pt]
\dfrac{X_n}{R_n} & \dfrac{R_n^2 - X_n^2}{R_n}
\end{pmatrix}.
\]
We observe that $\operatorname{Tr}(M'_n) \to +\infty$ as $n \to \infty$, so there exists $N \in \mathbb{N}$ such that for all $n \geq N$, we have $\operatorname{Tr}(M'_n) \geq 5$.
\end{proof}

The hyperbolic isometry $f_n$ defined above has an axis perpendicular to both $C_n$ and $C'_n$, and satisfies $f_n(C_n) = C'_n$. 
Thus, for all $n$, we have $d(i, C_n) = d(i, C'_n)$, hence
\[
d(i, C'_n) = d(f_n i, f_n C_n) = d(f_n i, C'_n).
\]
Therefore, for all $n$,
\[
\partial \mathbb{H}_i(f_n) = C'_n
\quad \text{and} \quad
\partial \mathbb{H}_i(f_n^{-1}) = C_n,
\]
where $\mathbb{H}_i(f_n) = \{ z \in \mathbb{H} \mid d(z, i) \leq d(f_n(i), z) \}.$

Moreover, by construction, if $k$ and $l$ are distinct indices, then
\[
\big( \mathbb{H}_i^c(f_k^{-1}) \cup \mathbb{H}_i^c(f_k) \big)
\cap
\big( \mathbb{H}_i^c(f_l^{-1}) \cup \mathbb{H}_i^c(f_l) \big)
= \emptyset.
\]
We deduce that the group $\Gamma := \langle f_n \mid n \geq 1 \rangle$ is an infinite Schottky group generated by a sequence of hyperbolic elements whose axes are nested.\\
Hence, the quotient surface $T := \Gamma \backslash \mathbb{H}$ is an \emph{untwisted flute surface} (see~\ref{GEO}).\\
By construction, the geodesic $\tilde{A}$ intersects all the axes $\left(f_n^-, f_n^+\right)$ orthogonally, and we have
\[
\liminf_{n \to +\infty} \ell(f_n) = +\infty.
\]
Therefore, $T$ is an untwisted flute surface with fundamental domain
\[
D_i(\Gamma) = \bigcap_{n \geq 1} \mathbb{H}_i(f_n) \cap \bigcap_{n \geq 1} \mathbb{H}_i(f_n^{-1}).
\]

\begin{figure}\label{GEO}
    \centering
    \includegraphics[width=0.7\linewidth]{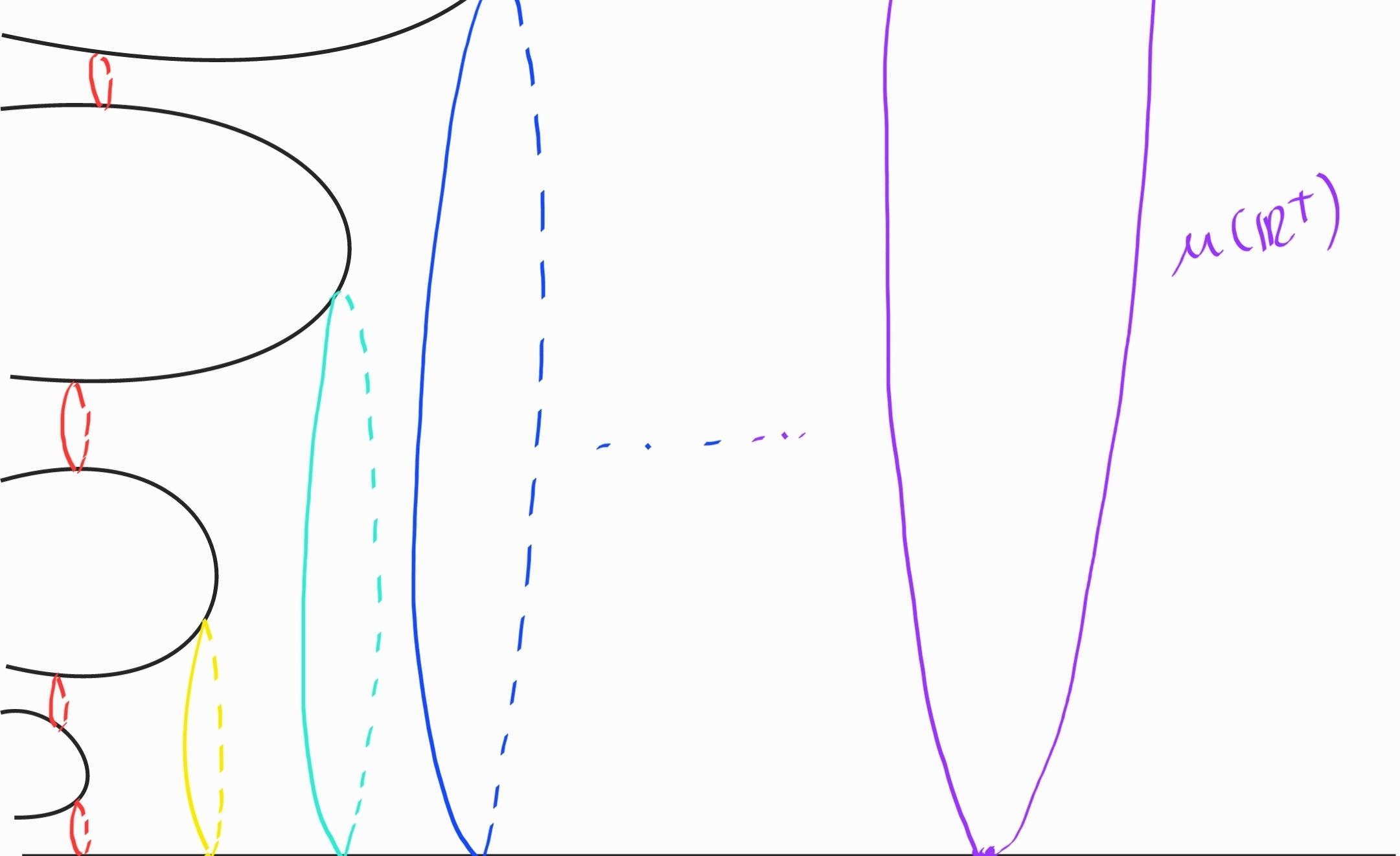}
    \caption{untwisted flute (Surface)}
    
\end{figure}
\newpage
\subsection{Asymptotic fineness of an infinite quasi-minimizing ray}

Let $u$ be an element in the unit tangent bundle of this untwisted flute $T$.\\ Let $\tilde{u}$ be the lift of $u$ in $T^{1}\mathbb{H}.$
\begin{dfn}
A geodesic ray $u(\mathbb{R}^+)$ on a hyperbolic surface $S$ is said to be \emph{quasi-minimizing} if there exists a constant $C \geq 0$ such that for all $t > 0$ one has $d(u(0), u(t)) \geq t - C.$\\ Moreover, the geodesic ray $u(\mathbb{R}^+)$ is said to be \emph{minimizing} if for all $t > 0$ one has $d(u(0), u(t)) = t.$
   
\end{dfn}

\begin{dfn}
An \emph{infinite quasi-minimizing half-geodesic} of a hyperbolic surface $S = \Gamma \backslash \mathbb{H}$ is a half-geodesic whose initial vector lies in the non-wandering set of the horocycle flow and is not $h_{\mathbb{R}}$-periodic.\\ Equivalently, the endpoint $\tilde{u}(+\infty)$ of a lift of $u(\mathbb{R}_+)$ in $\mathbb{H}$ belongs to $ L(\Gamma) \setminus \big( L_h(\Gamma) \cup L_p(\Gamma) \big).$
\end{dfn}

\begin{prp}{\cite{BellisThesis}}
Let $\tilde{u}_0$ be in $T^1\mathbb{H}$ based at $i$ and directed towards $\infty$.  \\
The half-geodesic $\tilde{u}_0(\mathbb{R}^+)$ projects down to  $S := \Gamma \backslash \mathbb{H}$  as an infinite quasi-minimizing half-geodesic $u_0(\mathbb{R}^+)$.
\end{prp}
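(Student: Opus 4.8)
The plan is to read everything off the explicit Schottky data $(f_n)_n$, the isometric circles $C_n, C'_n$, and the Dirichlet domain $D_i(\Gamma) = \bigcap_n \mathbb{H}_i(f_n) \cap \bigcap_n \mathbb{H}_i(f_n^{-1})$ centered at $i$. Recall that $\tilde u_0(\mathbb{R}^+)$ is the upper vertical ray $\{\, iy : y \ge 1 \,\}$, so $\tilde u_0(\infty) = \infty$. By the definition of an infinite quasi-minimizing half-geodesic together with the correspondence recalled in Section~3, it suffices to prove three facts about the boundary point $\infty$: that it lies in the limit set $\Lambda = L(\Gamma)$, that it is not parabolic, and that it is not horocyclic. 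The quasi-minimizing inequality itself will then follow from the fact that the ray sits inside (a bounded neighborhood of) the Dirichlet domain.

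First I would establish the quasi-minimizing property. Since $D_i(\Gamma)$ is the Dirichlet domain centered at $i$, every $z \in D_i(\Gamma)$ satisfies $d_S(\overline i, \overline z) = d_{\mathbb{H}}(i, z)$; hence, if the vertical ray remains in $D_i(\Gamma)$, then $d(u_0(0), u_0(t)) = t$ and the ray is in fact minimizing, so quasi-minimizing with $C = 0$. The geometric input is that the isometric circles $C_n$ (of $f_n^{-1}$) and $C'_n$ (of $f_n$) avoid the portion of the imaginary axis above $i$. Using $I_n - 1 = -\epsilon_n$ and the center/radius formulas for $C_n$, a point $iy$ lies on $C_n$ exactly when $y^2 = R_n^2 - X_n^2 = I_n - \epsilon_n X_n^2$; thus $C_n$ either misses the imaginary axis (when $\epsilon_n X_n^2 > I_n$) or meets it only at height $\sqrt{I_n - \epsilon_n X_n^2} < \sqrt{I_n} < 1$, i.e.\ strictly below $i$. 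One must then check the analogous statement for the reflected circles $C'_n$ via the reflection $z \mapsto 1/\overline z$ across $\tilde A = (-1,1)$ and the formulas for $X'_n, K_n$, concluding that $\{\, iy : y \ge 1 \,\}$ lies in $D_i(\Gamma)$ (or leaves it only in a bounded initial segment), which yields $d(u_0(0), u_0(t)) \ge t - C$.

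Next I would show $\infty \in \Lambda$, hence $u_0 \in \Omega_h$. The endpoints $\alpha_n, \beta_n$ of the bisectors $C_n$ tend to $+\infty$ by the proposition proved just above in the construction, and the fixed points of the hyperbolic generators $f_n$ are limit points attached to these escaping configurations; since $\Lambda$ is closed and these points march off to $+\infty$, the boundary point $\infty = \lim_n \beta_n$ belongs to $\Lambda$. Equivalently, one exhibits a sequence $\gamma_n \in \Gamma$ with $\gamma_n i \to \infty$, matching the mechanism of \lemref{A} and \lemref{H}. For the non-parabolic claim I use that $\Gamma$ is an infinite Schottky group generated by hyperbolic isometries with pairwise disjoint isometric circles, so $\Gamma$ contains no parabolic element; thus $\Lambda_p = \emptyset$, in particular $\infty \notin \Lambda_p$, which by the dictionary of Section~3 says precisely that $h_{\mathbb{R}} u_0$ is not periodic. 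Finally, to rule out $\infty \in \Lambda_h$ I would use the criterion that $\infty \notin \Lambda_h$ is equivalent to the existence of $M > 0$ with $\operatorname{Im}(\gamma i) \le M$ for all $\gamma \in \Gamma$, and prove this boundedness from the Schottky geometry: because $d(i, C_n) = d(i, C'_n)$ and $i$ is the common center of $D_i(\Gamma)$, the orbit $\Gamma.i$ cannot enter arbitrarily deep horodisks based at $\infty$. Together with $\infty \in \Lambda$ and $\infty \notin \Lambda_p$, this places $\infty \in \Lambda \setminus (\Lambda_h \cup \Lambda_p)$ and identifies $u_0(\mathbb{R}^+)$ as an infinite quasi-minimizing half-geodesic.

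The main obstacle is the explicit estimate underlying the first and last steps: verifying that the upper vertical ray genuinely avoids all the isometric circles (so that the ray is quasi-minimizing and $\Gamma.i$ stays at bounded height), since the reflected circles $C'_n$ are exactly the ones that threaten to rise above height $1$ near the imaginary axis. This comes down to controlling the competition between $\epsilon_n \to 0$ and $\xi_n \to +\infty$ in the formulas for $X_n, R_n$ and their reflections $X'_n, K_n$, and is where I would concentrate the care; the membership facts $\infty \in \Lambda$, $\infty \notin \Lambda_p$, and $\infty \notin \Lambda_h$ are then comparatively soft consequences of the Schottky structure and the results of Section~3.
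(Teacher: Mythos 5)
The paper does not actually prove this proposition --- it is quoted from Bellis's thesis with no argument supplied --- so your proposal has to be judged on its own merits rather than against a proof in the text. Your overall decomposition is the natural and correct one: show the vertical ray $\{iy : y \ge 1\}$ stays in the fundamental domain bounded by the circles $C_n, C'_n$ (hence projects to a minimizing, in particular quasi-minimizing, ray), and then place $\infty$ in $\Lambda \setminus (\Lambda_h \cup \Lambda_p)$, which is exactly the paper's definition of an infinite quasi-minimizing half-geodesic. Steps one through three are sound: your computation $R_n^2 - X_n^2 = I_n - Y_n^2/(1-I_n) < 0$ shows each $C_n$ misses the imaginary axis entirely; $\infty \in \Lambda$ follows since the repelling fixed points of the $f_n$ lie in $(\alpha_n,\beta_n)$ with $\alpha_n \to +\infty$ (or, more simply, since $f_n^{-1}i = p_n \to \infty$); and $\Lambda_p = \emptyset$ because a Schottky group with pairwise disjoint, non-tangent defining circles is purely hyperbolic. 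Two corrections, though: since the reflection $j(z) = 1/\bar{z}$ across $\tilde{A}$ preserves the imaginary axis setwise, the disjointness of the $C'_n$ from the axis is immediate from that of the $C_n$, so no separate estimate is needed there; and your stated worry is misdirected --- $X'_n, K_n \to 0$, so the $C'_n$ shrink toward the origin and never threaten height $1$, whereas it is the $C_n$ that are enormous ($R_n \to \infty$), merely located far to the right.

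The one genuine gap is your treatment of $\infty \notin \Lambda_h$. The reason you give (``$d(i,C_n)=d(i,C'_n)$ and $i$ is the common center of $D_i(\Gamma)$, so the orbit cannot enter arbitrarily deep horodisks'') is not a proof, and its naive reading is false: the interiors of the circles $C_n$ do contain points of arbitrarily large imaginary part, since $R_n \to \infty$, so ``orbit points sit inside circles of bounded height'' fails for half of the circles. The repair is to make this step a corollary of your first step rather than an independent estimate. If the ray is minimizing, then for any $\gamma \in \Gamma$ with $\gamma i = x + iy'$, test the inequality $d_S(u_0(0),u_0(t)) \le d_{\mathbb{H}}(\gamma i, iy)$ at $y = \sqrt{x^2 + y'^2}$: one computes $\cosh d_{\mathbb{H}}(\gamma i, iy) = y/y'$, hence
\[
\log y \;=\; d_S\bigl(u_0(0),u_0(\log y)\bigr) \;\le\; \cosh^{-1}\!\left(\frac{y}{y'}\right) \;\le\; \log\frac{2y}{y'},
\]
forcing $\operatorname{Im}(\gamma i) = y' \le 2$ for every $\gamma$; by the remark on limit points in Section 3 this is precisely $\infty \notin \Lambda_h$. (Equivalently, invoke the standard Dal'bo--Starkov/Bellis fact that a ray is quasi-minimizing if and only if its endpoint is not horocyclic.) Note also that your first step silently uses the identification of the paper's $D_i(\Gamma)$ --- defined by the generator bisectors only --- with the full Dirichlet domain, i.e.\ that $d(z,i) \le d(z,\gamma i)$ for \emph{all} $\gamma \in \Gamma$ once $z$ avoids all the circles; this is the classical ping-pong fact for bisector-circle Schottky configurations and should be stated or cited, since without it ``the ray lies outside every $C_n$ and $C'_n$'' does not by itself yield the minimizing property.
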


\begin{prp}{\cite{BellisThesis}}
The half-geodesic $\widetilde{u}_0(\mathbb{R}_+) = [i, \infty)$ in $T^1\mathbb{H}$ projects onto $T$ as a quasi-minimizing half-geodesic of infinite asymptotic finess.
\end{prp}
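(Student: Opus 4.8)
The plan is to use the preceding proposition, which already yields that $u_0(\mathbb{R}^+)$ is quasi-minimizing, so that only the \emph{infinite asymptotic finess} remains to be proved, namely
\[
\mathrm{Inj}\big(u_0(\mathbb{R}^+)\big)=\liminf_{t\to+\infty}\mathrm{Inj}\big(u_0(t)\big)=+\infty .
\]
Writing $z_t=\widetilde{u}_0(t)=ie^{t}$ for the lift of $u_0(t)$, this amounts to $\inf_{\gamma\in\Gamma\setminus\{\mathrm{Id}\}}d(z_t,\gamma z_t)\to+\infty$. I would first record that no nontrivial element of $\Gamma$ fixes the endpoint $\widetilde{u}_0(\infty)=\infty$: the group $\Gamma$ is Schottky, hence has no parabolics, and since $u_0$ is an infinite quasi-minimizing ray we have $\infty\in\Lambda\setminus(\Lambda_h\cup\Lambda_p)$, so in particular $\infty\notin\Lambda_h$; as every fixed point of a hyperbolic element is a radial, hence horocyclic, limit point, $\infty$ is not an axis endpoint either. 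Thus every $\gamma\neq\mathrm{Id}$ is hyperbolic with a genuine axis $A_\gamma\subset\mathbb{H}$ and translation length $\ell(\gamma)>0$, and one has the displacement identity
\[
\sinh\!\big(\tfrac12 d(z,\gamma z)\big)=\cosh\big(d(z,A_\gamma)\big)\,\sinh\!\big(\tfrac12\ell(\gamma)\big),
\]
from which $d(z,\gamma z)\ge\ell(\gamma)$, and, for fixed $\ell(\gamma)$, the displacement grows with the distance from $z$ to $A_\gamma$.

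Fix $M>0$. The idea is to split $\Gamma\setminus\{\mathrm{Id}\}$ according to translation length. Every $\gamma$ with $\ell(\gamma)>M$ satisfies $d(z_t,\gamma z_t)\ge\ell(\gamma)>M$ for all $t$, so such elements never cause trouble. It remains to control $\Gamma_M=\{\gamma\in\Gamma:\ell(\gamma)\le M\}$. The axes of its elements project to closed geodesics of $T$ of length at most $M$, and I claim there are only finitely many such closed geodesics. Indeed, since $\liminf_n\ell(f_n)=+\infty$, only finitely many generators $f_1,\dots,f_N$ have length $\le M$; by the ping-pong length estimate in a Schottky group a cyclically reduced word involving some $f_n$ with $n>N$ has translation length bounded below by a quantity that tends to $+\infty$ with $n$, hence exceeds $M$ once $n$ is large. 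Consequently every closed geodesic of length $\le M$ is carried by the convex-cocompact subgroup $\langle f_1,\dots,f_N\rangle$, which has only finitely many closed geodesics of bounded length. Let $K_M\subset T$ be the compact union of these finitely many geodesics.

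Next I would show the ray escapes $K_M$. Because $\infty\notin\Lambda_h$, the Remark of Section~3 characterising $\infty\notin\Lambda_h$ gives a constant $M'>0$ with $\operatorname{Im}(\gamma i)\le M'$ for all $\gamma\in\Gamma$; since then every orbit point of $\Gamma i$ lies below height $M'$ while $z_t$ has height $e^{t}$, the elementary bound $d(z,w)\ge|\log\operatorname{Im}z-\log\operatorname{Im}w|$ yields $d(z_t,\Gamma i)\ge t-\log M'\to+\infty$. Projecting, $d_T(u_0(t),x_0)\to+\infty$, where $x_0$ is the image of $i$; as $K_M$ is compact this forces $d_T(u_0(t),K_M)\to+\infty$, that is,
\[
\inf_{\gamma\in\Gamma_M}d(z_t,A_\gamma)\longrightarrow+\infty\qquad(t\to+\infty).
\]
Feeding this into the displacement identity shows $d(z_t,\gamma z_t)\to+\infty$ uniformly over $\gamma\in\Gamma_M$, so there is $T$ with $d(z_t,\gamma z_t)>M$ for all $t\ge T$ and all $\gamma\in\Gamma_M$. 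Combined with the case $\ell(\gamma)>M$, this gives $\mathrm{Inj}(u_0(t))>M$ whenever $t\ge T$; as $M$ was arbitrary, $\liminf_{t\to+\infty}\mathrm{Inj}(u_0(t))=+\infty$.

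The step I expect to be the genuine obstacle is the passage to a compact obstacle set: proving that only finitely many closed geodesics have length $\le M$ — this is exactly where the quantitative hypothesis $\liminf_n\ell(f_n)=+\infty$ together with the Schottky ping-pong length estimate is indispensable — and then converting the single soft input $\infty\notin\Lambda_h$ into the \emph{uniform} escape $\inf_{\gamma\in\Gamma_M}d(z_t,A_\gamma)\to+\infty$. By contrast, the displacement identity, the bound $d(z_t,\gamma z_t)\ge\ell(\gamma)$, and the reduction to the forward endpoint $\infty$ lying outside $\Lambda_h\cup\Lambda_p$ are routine.
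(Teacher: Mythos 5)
A preliminary remark: the paper does not prove this proposition at all --- it is imported from Bellis's thesis \cite{BellisThesis} without argument --- so there is no internal proof to compare against, and your attempt has to stand on its own. Its outer structure is sound: the displacement identity $\sinh\bigl(\tfrac12 d(z,\gamma z)\bigr)=\cosh\bigl(d(z,A_\gamma)\bigr)\sinh\bigl(\tfrac12\ell(\gamma)\bigr)$, the splitting of $\Gamma\setminus\{\mathrm{Id}\}$ at translation length $M$, and the escape estimate $d(z_t,\Gamma i)\ge t-\log M'$ deduced from $\infty\notin\Lambda_h$ are all correct. The genuine gap is exactly the step you single out as the crux and then merely assert: that for each $M$ only finitely many closed geodesics of $T$ have length $\le M$, on the strength of a ``ping-pong length estimate'' according to which any cyclically reduced word involving $f_n$ with $n$ large has translation length tending to $+\infty$ with $n$. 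No such estimate follows from the only inputs you allow yourself --- $\liminf_n\ell(f_n)=+\infty$ and disjointness of the Schottky circles --- and at that level of generality it is false.

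Concretely, the paper defines $f_n=j\circ R$, i.e.\ $f_n=\sigma_{\tilde A}\,\sigma_{C_n}$ where $\sigma_{\tilde A}$ and $\sigma_{C_n}$ are the reflections in $\tilde A$ and $C_n$; hence $f_n^{-1}f_{n+1}=\sigma_{C_n}\,\sigma_{C_{n+1}}$, a hyperbolic element whose axis is the common perpendicular of $C_n$ and $C_{n+1}$ and whose translation length is exactly $2\,d(C_n,C_{n+1})$. Nothing you use bounds $d(C_n,C_{n+1})$ from below: one can choose $\xi_n\to\infty$ and $\epsilon_n\to 0$ so that the circles are pairwise disjoint and $\ell(f_n)=d(C_n,C_n')\to+\infty$, while the gaps $d(C_n,C_{n+1})$ stay bounded (for instance, take the left endpoint of $C_{n+1}$ equal to twice the right endpoint of $C_n$; two disjoint geodesics can be as close as one likes). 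For such parameters $\Gamma_M$ contains $f_n^{-1}f_{n+1}$ for all large $n$, your set $K_M$ is not compact, and the escape argument collapses. Worse, a direct computation of the common perpendicular of $C_n$ and $C_{n+1}$ in that regime shows its endpoints $u_n<v_n$ satisfy $u_n/v_n$ bounded away from $1$, so these axes pass at uniformly bounded distance from the vertical geodesic $(0,\infty)$, with nearest point at height $\sqrt{u_nv_n}\to+\infty$; by your own displacement identity, $\mathrm{Inj}\bigl(u_0(t_n)\bigr)$ then stays bounded along a sequence $t_n\to+\infty$, i.e.\ the proposition itself fails for that choice of parameters. This shows the missing step is not a routine lemma but the actual content of the statement: any proof must extract from the construction (as the thesis's careful version does, and the paper's loose write-up does not) that the distances between distinct Schottky circles of large index tend to infinity; only with that quantitative input does a ping-pong bound of the form $\ell(g_1\cdots g_k)\ge\sum_i d\bigl(D_{g_i^{-1}},D_{g_{i+1}}\bigr)$ deliver your finiteness claim, and your proof never establishes it.
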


\subsection{No-closed nor-dense Orbit}
\begin{prp}
On the unit tangent bundle of the untwisted flute $T$, the half-geodesic $u_0(\mathbb{R}_+)$ is infinite quasi-minimizing with infinite asymptotic fineness, and the horocycle orbit $h_\mathbb{R} u_0 $ is not closed.\\
\end{prp}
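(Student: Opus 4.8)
The plan is to obtain all three assertions from what is already known about the endpoint $\widetilde{u}_0(+\infty)=\infty$. The first two properties—that $u_0(\mathbb{R}_+)$ is infinite quasi-minimizing and has infinite asymptotic fineness—are precisely the content of the two preceding propositions taken from \cite{BellisThesis}, so I would simply invoke them. In particular, being infinite quasi-minimizing means by definition that $\infty\in\Lambda\setminus(\Lambda_h\cup\Lambda_p)$; by the decomposition of the limit set this gives $\infty\in\Lambda_d\cup\Lambda_{\mathrm{irr}}$, and by the dictionary between the nature of a limit point and the topology of its horocyclic orbit it already rules out the periodic and the dense cases. Hence the only remaining content of the statement is to show that $\infty\notin\Lambda_d$: this makes $\infty$ irregular and, in particular, makes $h_{\mathbb{R}}u_0$ non-closed.

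To prove $\infty\notin\Lambda_d$ I would exhibit infinitely many pairwise distinct orbit points sitting in a fixed horodisk based at $\infty$, and then apply the criterion recalled above for non-discrete limit points. The natural candidates are the orbit points $f_n^{-1}(i)$. Recall from the construction that $f_n=j\circ R_n$, where $R_n$ is the reflection across the perpendicular bisector $C_n$ of $[i,p_n]$ and $j$ is the reflection across $\widetilde{A}=(-1,1)$, which fixes $i$ because $i\in\widetilde{A}$. Since reflections are involutions, this yields
\[
f_n^{-1}(i)=R_n\bigl(j(i)\bigr)=R_n(i)=p_n,
\]
the last equality because $R_n$ exchanges $i$ and $p_n$. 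Consequently
\[
\operatorname{Im}\bigl(f_n^{-1}(i)\bigr)=\operatorname{Im}(p_n)=I_n=1-\epsilon_n\longrightarrow 1>0\qquad(n\to+\infty).
\]
The same value can be read off directly from the normalized matrix: the bottom row of $(M'_n)^{-1}$ is $(-X_n/R_n,\ 1/R_n)$, so $\operatorname{Im}(f_n^{-1}(i))=R_n^2/(X_n^2+1)=I_n$, using $R_n^2=I_n(1+X_n^2)$; note this is insensitive to the sign convention for $X_n$.

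Since $\Gamma$ is a free Schottky group on the $f_n$, the inverses $f_n^{-1}$ are pairwise distinct, and (as $\Gamma$ has no elliptic elements) the resulting orbit points $p_n$ are distinct as well; taking the $\epsilon_n$ strictly decreasing, the heights $I_n=1-\epsilon_n$ also form a non-constant sequence converging to $1>0$. Equivalently, for large $n$ the distinct points $p_n$ all lie in the horodisk $\{\operatorname{Im}(z)>\tfrac12\}$ based at $\infty$, so this horodisk meets $\Gamma i$ in an infinite set; hence $\infty$ cannot be discrete, i.e.\ $\infty\notin\Lambda_d$. Together with $\infty\notin\Lambda_h\cup\Lambda_p$, this forces $\infty\in\Lambda_{\mathrm{irr}}$, and the correspondence then gives that $h_{\mathbb{R}}u_0$ is irregular—neither closed nor dense—so in particular not closed. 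The one genuinely substantive point is the identity $f_n^{-1}(i)=p_n$ (equivalently the computation $\operatorname{Im}(f_n^{-1}(i))=I_n$): it is the crux, while the distinctness of the $f_n^{-1}$, the non-constancy of the heights, and the final passage through the limit-set dictionary are all immediate.
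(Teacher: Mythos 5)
Your proof is correct, and it hinges on exactly the same core computation as the paper's own proof --- the identity $f_n^{-1}(i)=R_n(j(i))=p_n$, hence $\operatorname{Im}\bigl(f_n^{-1}(i)\bigr)=I_n\to 1$ (equivalently $B_\infty(f_n^{-1}i,i)=\epsilon_n\to 0$) --- but the concluding mechanism is genuinely different. The paper argues dynamically: it takes $v$ to be the vector based at $i$ pointing towards $0$ (the opposite of $u_0$) and checks the two conditions of Bellis's horocyclic-convergence criterion, namely $f_n(\infty)\to 0=\tilde v(\infty)$ and $B_\infty(f_n^{-1}i,i)\to 0$, to conclude $v\in\overline{h_{\mathbb{R}}u_0}$, whence the orbit is not closed since $v$ is not on it. You argue instead through the boundary classification: the pairwise distinct orbit points $p_n$ accumulate at height $1$, so some fixed horodisk based at $\infty$ meets $\Gamma i$ in an infinite set and $\infty\notin\Lambda_d$; combined with $\infty\notin\Lambda_h\cup\Lambda_p$ (which is the content of the quoted quasi-minimizing propositions) this forces $\infty\in\Lambda_{\mathrm{irr}}$, and the dictionary between limit points and orbit topology finishes the argument. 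Your route buys two things: it does not need the extra convergence $f_n(\infty)\to 0$, and it does not need to verify that $v$ lies \emph{outside} the orbit $h_{\mathbb{R}}u_0$ --- a point the paper asserts ($v\in\overline{h_{\mathbb{R}}u_0}\setminus h_{\mathbb{R}}u_0$) but never actually checks. Moreover your conclusion is a priori stronger and better adapted to Theorem~\ref{TAHA}: you obtain irregularity (neither closed nor dense), not merely non-closedness. What the paper's route buys in exchange is more precise dynamical information, namely the identification of a concrete vector ($v$, the opposite vector at $i$) lying in the orbit closure, which your classification argument does not produce.
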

\begin{proof}
Let us denote by $\tilde{v}$ the element of $T^1 T$ whose lift $\tilde{v}$ is based at $i$ and directed towards $0$. That is, $v$ is the vector opposite to $u$.
We will show that $v \in \overline{h_\mathbb{R} u} \setminus h_\mathbb{R} u$. Observe that for every $n$, we have $\gamma_n^{-1} i = p_n$. Thus, $B_\infty(\gamma_n^{-1} i, i) =\epsilon_n.$
Therefore, the sequence of distinct elements $(\gamma_n)_{\mathbb{N}}$ satisfies $\lim_{n \to +\infty} B_\infty(\gamma_n^{-1} i, i) = 0.$ Moreover, $\lim_{n \to +\infty} \gamma_n \infty = 0.$
\\
According to the horocycle convergence, this implies that indeed $v \in \overline{h_\mathbb{R} u}$.\\
\end{proof}

\subsection{Let us show that $T_u = \{ t \in \mathbb{R} : g_t u \in \overline{h_{\mathbb{R}} u} \} = \{0\}$.}

Suppose that there exists $t_1\neq 0$ such that $g_{t_{1}}u \in\overline{h_{\mathbb{R}}u}$ with $u\in\Omega_{h}$.
Then, according to \ref{tunv}, there exists a sequence $(\gamma_n)\subset\Gamma$ of isometries, all distinct from the identity, satisfying:

\begin{enumerate}
    \item $\gamma_n(\infty) \to \infty $,
    \item $B_\infty(\gamma_{n}^{-1}(i), i) \to t_1 $.
\end{enumerate}

Writing $\gamma_n(z) = \frac{a_n z + b_n}{c_n z + d_n}$  with $ a_n d_n - b_n c_n = 1 $, and using conditions $(1)$ and $(2)$, we can extract subsequences $(a_n)$ and $(c_n)$ converging respectively to $a \neq 0 $ and $0$.

By applying Lemma \ref{T} , we can deduce that the sequence $(b_n)$ also converges to $0$.
The relation $a_n d_n - b_n c_n = 1$ ensures that the sequence $(d_n)$ converges to $\frac{1}{a}$.

Since the group $\Gamma$ is discrete, we deduce that $\gamma_n$ is constant for sufficienttly large $n$, which implies that $a=d=1$. This contradicts our hypothesis.

\section{properties of $T_{u}$}

The set $\tilde{T}_u = \{ t \in \mathbb{R}^+ \mid g_t u \in\overline{ h_{\mathbb{R}}(u)} \}$ plays an important role in describing the non-minimal sets for the horocycle flow. We will show that this set is a closed semigroup of $\mathbb{R}^+$.

\begin{prp}
Let $S$ be a geometrically infinite hyperbolic surface and $T^1S$ its unit tangent bundle. Let $u \in \Omega_h$. Then the set $\tilde{T_u}$ is a semigroup of $\mathbb{R}^+$.
\end{prp}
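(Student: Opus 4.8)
The plan is to prove that $\tilde{T}_u$ is closed under addition, i.e.\ that $s,t \in \tilde{T}_u$ implies $s+t \in \tilde{T}_u$; I would also note in passing that $0 \in \tilde{T}_u$, since $g_0 u = u \in \overline{h_{\mathbb{R}}(u)}$, so the set is in fact a submonoid. The engine of the whole argument is the commutation relation between the two flows. Writing the geodesic flow as $g_t$ and the horocyclic flow as $h_s$, one has on $T^1S$ the identity $g_t h_s = h_{s e^{t}} g_t$, coming from the corresponding relation $a_t n_s = n_{s e^{t}} a_t$ between the diagonal and unipotent one-parameter subgroups of $PSL(2,\mathbb{R})$.

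First I would extract two consequences of this relation. Since $s \mapsto s e^{t}$ is a bijection of $\mathbb{R}$, the identity gives $g_t(h_{\mathbb{R}} v) = h_{\mathbb{R}}(g_t v)$ for every $v \in T^1S$: the geodesic flow permutes horocyclic orbits, merely reparametrizing them. As each $g_t$ is a homeomorphism of $T^1S$, it exchanges with the closure operation, so
\[
g_t\bigl(\overline{h_{\mathbb{R}}(v)}\bigr) = \overline{g_t(h_{\mathbb{R}}(v))} = \overline{h_{\mathbb{R}}(g_t v)}.
\]
Next I would record the $h_{\mathbb{R}}$-invariance of $\overline{h_{\mathbb{R}}(u)}$: for fixed $s$, the homeomorphism $h_s$ fixes $h_{\mathbb{R}}(u)$ setwise, hence $h_s(\overline{h_{\mathbb{R}}(u)}) = \overline{h_{\mathbb{R}}(u)}$, so $\overline{h_{\mathbb{R}}(u)}$ is a closed $h_{\mathbb{R}}$-invariant set. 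Consequently, whenever $w \in \overline{h_{\mathbb{R}}(u)}$, the whole orbit $h_{\mathbb{R}}(w)$, and therefore its closure $\overline{h_{\mathbb{R}}(w)}$, lies inside $\overline{h_{\mathbb{R}}(u)}$.

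With these in hand the conclusion is immediate. Let $s,t \in \tilde{T}_u$, so that $g_s u \in \overline{h_{\mathbb{R}}(u)}$ and $g_t u \in \overline{h_{\mathbb{R}}(u)}$. Applying $g_t$ to the first membership gives
\[
g_{s+t}u = g_t(g_s u) \in g_t\bigl(\overline{h_{\mathbb{R}}(u)}\bigr) = \overline{h_{\mathbb{R}}(g_t u)}.
\]
Since $g_t u \in \overline{h_{\mathbb{R}}(u)}$, the invariance step yields $\overline{h_{\mathbb{R}}(g_t u)} \subseteq \overline{h_{\mathbb{R}}(u)}$, whence $g_{s+t}u \in \overline{h_{\mathbb{R}}(u)}$, that is $s+t \in \tilde{T}_u$. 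This shows $\tilde{T}_u$ is a subsemigroup of $\mathbb{R}^{+}$.

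I do not expect a genuine obstacle here: the statement follows formally from the flow relation together with elementary topology. The only points requiring care are bookkeeping ones, namely justifying that the homeomorphism $g_t$ commutes with the closure, and that $\overline{h_{\mathbb{R}}(u)}$ is truly $h_{\mathbb{R}}$-invariant (not merely the closure of a single orbit); both rest on the continuity of the two flows. If the paper later wishes to assert closedness of $\tilde T_u$ as well, that is a separate matter and would instead use Corollary~\ref{tunv} or a limiting argument on the defining conditions, rather than the algebraic identity used here.
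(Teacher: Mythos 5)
Your proof is correct and takes essentially the same route as the paper's: both arguments rest on the quasi-commutation relation between the geodesic and horocycle flows, the $h_{\mathbb{R}}$-invariance of $\overline{h_{\mathbb{R}}(u)}$, and the closedness of that set. The only difference is presentational, in that the paper runs the argument on a sequence $h_{s_n}u \to g_{t_2}u$ and passes to the limit, whereas you package the same three facts into the set-level identity $g_t\bigl(\overline{h_{\mathbb{R}}(u)}\bigr) = \overline{h_{\mathbb{R}}(g_t u)} \subseteq \overline{h_{\mathbb{R}}(u)}$.
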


\begin{proof}
Let $t_1, t_2 \in \tilde{T_u}$. By definition, we have 
$g_{t_1} u \in \overline{h_{\mathbb{R}}(u)}.$
Since $h_{\mathbb{R}}(u)$ is invariant under the horocycle flow, for any $s \in \mathbb{R}$, we also have $h_s g_{t_1} u \in \overline{h_{\mathbb{R}}(u)}.$
On the other hand, since $t_2 \in \tilde{T_u}$, there exists a sequence $(s_n)_n$ of real numbers such that $h_{s_n} u \to g_{t_2} u.$
Using the quasi-commutation relation between the geodesic and horocycle flows, we get $
g_{t_1} h_{s'_n} u = h_{s_n} g_{t_1} u \in h_{\mathbb{R}}(u)$
for some sequence $(s'_n)_n$. From the convergence $h_{s_n} u \to g_{t_2} u$, we deduce that $g_{t_1} h_{s_n} u \to g_{t_1 + t_2} u,
$ which means there exists a sequence \((a_n)_n\) of real numbers such that $h_{a_n} g_{t_1} u \to g_{t_1 + t_2} u.$
But since for all \(s \in \mathbb{R}\), \(h_s g_{t_1} u \in \overline{h_{\mathbb{R}}(u)}\), the sequence \(h_{a_n} g_{t_1} u\) lies in $\overline{h_{\mathbb{R}}(u)}$ and converges to \(g_{t_1 + t_2} u\). Hence,
$g_{t_1 + t_2} u \in \tilde{T_u}.$
Therefore, \(\tilde{T_u}\) is a semigroup.
\end{proof}
\newpage
\begin{prp}
The set $\tilde{T}_u = \{ t \in \mathbb{R}^+ \mid g_t u \in \overline{h_{\mathbb{R}}(u)} \}$ is a non-empty closed subset of $\mathbb{R}^+.$
\end{prp}

\begin{proof}
Recall that the non-wandering set for the horocycle flow is given by $\Omega_h = \{ u \in T^1 S \mid \tilde{u}(+\infty) = +\infty\in\Lambda(\Gamma) \}.
$ Suppose that $t_0 \in \mathbb{R}^+ \setminus T_u$, i.e.,$g_{t_0} u \notin \overline{h_{\mathbb{R}}(u)}.
$ Since $\Omega_h \setminus \overline{h_{\mathbb{R}}(u)}$ is open, there exists a ball \(B(g_{t_0} u; \varepsilon)\) of center $g_{t_0} u$ and radius $\varepsilon > 0$ such that $B(g_{t_0} u;\varepsilon) \subset \Omega_h \setminus \overline{h_{\mathbb{R}}(u)}.$ In particular, $B(g_{t_0} u; \varepsilon) \cap \Omega_h \neq \emptyset.$
It follows that there exists $\eta > 0$ such that for all $t \in (\eta - t_0, \eta + t_0),$ we have $g_t u \notin \overline{h_{\mathbb{R}}(u)}.$
Thus, there exists an open interval containing $t_0$ in which no element belongs to $\tilde{T_u}$. Hence, $\tilde{T_u}$ is closed.
\end{proof}

\section{Example in the case that $T_{u}$ is infinite}

We consider the group $\Gamma = \langle h_{p_n} \rangle_{n \in \mathbb{N}^*}$, an infinite Schottky group, with $h_{p_n}(z)=\frac{a_{p_n} z + b_{p_n}}{c_{p_n} z + d_{p_n}}, $ where $\delta>1$ is a fixed real number. For every integer $n \geq 1$, we define: $a_{p_n} = \delta+\frac{ 2p_n}{p_n^2 + 1}, \quad b_{p_n} = p_n + (p_n^2 + 1)\delta, \quad c_{p_n} = \frac{1}{p_n}, \quad d_{p_n} = \frac{p_n^2 + 1}{p_n}.$ For every $n \geq 1$, we verify that: $a_{p_n} d_{p_n} - b_{p_n} c_{p_n} = 1.$
The sequence $(p_n)_{n \geq 1}$ of natural numbers satisfies the recurrence relation:
$p_1 = 1 + \left\lfloor \frac{\delta - 1}{2} \right\rfloor, \quad
p_{n+1} = 1 + \left\lfloor \frac{(\delta + 1)p_n}{\delta - 1} \right\rfloor= 1 + p_n + \left\lfloor \frac{2p_n}{\delta - 1} \right\rfloor,$ where \( \lfloor \cdot \rfloor \) denotes the integer part (see \cite{SyGaye2025} for more details).
The surface $S = \mathbb{H}/\Gamma$ obtained is a twisted flute. Moreover, there exists a half-ray $u \in \Omega_h $ that is quasi-minimizing.
The surface has infinite asymptotic fineness (see \cite{SyGaye2025}).

The horocyclic orbit $h_{\mathbb{R}} u$ is irregular and non-recurrent. The following lemma shows the non-minimality of the closure of this orbit.

\begin{lem}
Let $\Gamma = \langle h_{p_n} \rangle_{n \in \mathbb{N}^{*}}$ be a geometrically infinite Fuchsian group, where $h_{p_n}(z) = \frac{a_{p_n}z + b_{p_n}}{c_{p_n}z + d_{p_n}}$. Let $S = \mathbb{H}/\Gamma$ be the associated hyperbolic surface, and let $u \in T^{1}S$. Then there exists an unbounded sequence of times $(t_n)_n$ such that $g_{t_n}u \in \overline{h_{\mathbb{R}}u}$.
\end{lem}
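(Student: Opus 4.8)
The plan is to reduce the statement to producing a single strictly positive element of $T_u$ and then to exploit the semigroup structure of $\tilde{T}_u$ established above. Throughout I take the lift of the quasi-minimizing ray $u$ to be the vertical geodesic with $\widetilde{u}(0) = i$ and $\widetilde{u}(\infty) = \infty$, so that $u \in \Omega_h$ precisely because $\infty \in \Lambda(\Gamma)$. With the Busemann cocycle centered at $\infty$ one has $B_\infty(z,w) = \log\big(\operatorname{Im}(w)/\operatorname{Im}(z)\big)$; and since a geodesic push $g_t u$ keeps the endpoint $\infty$ while moving the basepoint to $\widetilde{g_t u}(0) = i e^{t}$, Corollary \ref{tunv} (with $\alpha = \mathrm{Id}$) reads: $t \in T_u$ as soon as there is a sequence $(\gamma_m)_m \subset \Gamma$ with $\gamma_m \infty \to \infty$ and $B_\infty(\gamma_m^{-1} i, i) = -\log \operatorname{Im}(\gamma_m^{-1} i) \to t$. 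Recalling that the lower row $(C,D)$ of any unit-determinant matrix gives $\operatorname{Im}(\,\cdot\, i) = (C^2+D^2)^{-1}$, one has $\operatorname{Im}(\gamma_m^{-1} i) = (a_m^2 + c_m^2)^{-1}$ when $\gamma_m$ has coefficients $(a_m,b_m,c_m,d_m)$.

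\textbf{First}, I would produce one positive time by taking $\gamma_m = h_{p_m}$, the generators themselves. A direct computation gives $h_{p_m}(\infty) = a_{p_m}/c_{p_m} = p_m\delta + 2p_m^2/(p_m^2+1) \to +\infty$, so the first condition holds. Since $a_{p_m} = \delta + 2p_m/(p_m^2+1) \to \delta$ and $c_{p_m} = 1/p_m \to 0$, I obtain $\operatorname{Im}(h_{p_m}^{-1} i) = (a_{p_m}^2 + c_{p_m}^2)^{-1} \to \delta^{-2}$, hence $B_\infty(h_{p_m}^{-1} i, i) = \log(a_{p_m}^2 + c_{p_m}^2) \to 2\log\delta =: t_0$. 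Corollary \ref{tunv} then yields $t_0 \in T_u$, and as $\delta > 1$ we have $t_0 = 2\log\delta > 0$, i.e. $t_0 \in \tilde{T}_u$.

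\textbf{Then}, I would bootstrap to an unbounded family using the proposition proved above, by which $\tilde{T}_u$ is a semigroup of $\mathbb{R}^+$. Being closed under addition and containing the strictly positive element $t_0$, it contains every multiple $t_n := n\,t_0 = 2n\log\delta$ for $n \geq 1$. This sequence is unbounded, and by construction each $t_n$ satisfies $g_{t_n} u \in \overline{h_{\mathbb{R}} u}$, which is exactly the claim.

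The routine part is the two limit computations; the only point deserving care is fixing the sign convention in the Busemann cocycle so that the limiting value $2\log\delta$ comes out strictly positive (as a consistency check, the recurrence criterion of Corollary \ref{MM} corresponds to $\operatorname{Im}(\gamma_m^{-1} i) \to 1$, i.e. $t = 0$). \textbf{The main conceptual step}, which I would emphasize, is the reduction itself: instead of building a separate approximating sequence for each target time, one exhibits a single positive time directly from the generators and lets the semigroup property of $\tilde{T}_u$ manufacture the unbounded sequence for free. Were that semigroup result unavailable, the fallback would be to realize the $t_n$ individually through the powers $h_{p_m}^{\,n}$ and to control $\operatorname{Im}\big((h_{p_m}^{\,n})^{-1} i\big)$ as $m \to \infty$, which is where the genuine computational effort would lie.
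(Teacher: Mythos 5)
Your argument is correct, but it takes a genuinely different route from the paper's. The paper works entirely inside Corollary~\ref{tunv}: for each fixed $k$ it forms the words $\gamma_{n,k}=h_{p_n}h_{p_n^2}\cdots h_{p_n^{2^k}}$, splits $B_\infty(\gamma_{n,k}^{-1}i,i)$ via the cocycle relation into elementary terms and interaction terms, and computes that each elementary term tends to $\log\delta^2$ while each interaction term tends to $\log\frac{(\delta+1)^2}{\delta^2}$ (using the limit of the ratio $h_{p_n^{2^{l-1}}}^{-1}i\,/\,h_{p_n^{2^l}}\cdots h_{p_n^{2^k}}\infty \to -1/\delta$), so that $t_k=2\log\bigl(\delta(\delta+1)^k\bigr)\in T_u$ and unboundedness is read off in $k$. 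Your proof performs only the one-letter case of this computation ($\gamma_m=h_{p_m}$, giving $t_0=2\log\delta>0$, which is exactly the paper's $k=0$ value) and then outsources everything else to the semigroup proposition of the preceding section, obtaining $nt_0\in\tilde{T}_u$ for all $n\geq 1$. Since that proposition is established before the present lemma and applies here ($S$ is geometrically infinite and $u\in\Omega_h$), there is no circularity. What the paper's longer computation buys is more structural information: its times $2\log\delta+2k\log(\delta+1)$ are realized by explicit sequences of words and are not additive multiples of $2\log\delta$ alone. What your route buys is economy: the only limits needed are $a_{p_m}^2+c_{p_m}^2\to\delta^2$ and $a_{p_m}/c_{p_m}\to\infty$, and the delicate multi-term Busemann estimate disappears.

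One caveat on your closing remark: the proposed fallback via the powers $h_{p_m}^{\,n}$ would \emph{not} work as stated. Already for $n=2$, the relevant matrix entry of $h_{p_m}^{\,2}$ is $a_{p_m}^2+b_{p_m}c_{p_m}$, and $b_{p_m}c_{p_m}=1+\delta\,\frac{p_m^2+1}{p_m}\to+\infty$, so $B_\infty\bigl((h_{p_m}^{\,2})^{-1}i,i\bigr)=\log\Bigl[\bigl(a_{p_m}^2+b_{p_m}c_{p_m}\bigr)^2+c_{p_m}^2(a_{p_m}+d_{p_m})^2\Bigr]\to+\infty$, so no finite time is produced; the same blow-up occurs for every fixed $n\geq 2$. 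This is precisely why the paper builds its words from generators with rapidly growing indices rather than from repeated letters: only in that regime do the interaction terms stay bounded. The remark is inessential, since your main argument is complete without it, but it should not be left suggesting that the powers realize the times $2n\log\delta$.
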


\begin{proof}
Let $\Gamma$ be the Fuchsian group generated by the $h_k$, and let $W = \{h_{p_n}, h_{p_n}^{-1}\}$ be the alphabet. Let $u \in T^1 S$, and let $\tilde{u}(\mathbb{R}^+)$ be the lift of $u(\mathbb{R}^+)$ to $T^1 \mathbb{H}$.  

Without loss of generality, we may conjugate the group so that $u(\infty) = +\infty$.  
We consider the reduced word $\gamma_{n,k} = h_{p_n} h_{p_n^2} h_{p_n^4} \ldots h_{p_n^{2^k}}$ for all $k \geq 1$. We have:

\[
h_{p_n} h_{p_n^2} h_{p_n^4} \ldots h_{p_n^{2^k}}(\infty) = h_{p_n}(\infty) \left[1 - \frac{1}{a_{p_n} c_{p_n} h_{p_n^2} h_{p_n^4} \ldots h_{p_n^{2^k}}(\infty) + a_{p_n} d_{p_n}}\right].
\]

Since $h_{p_n}(\infty) = \frac{a_{p_n}}{c_{p_n}} = \delta p_n + \frac{2p_n^2}{p_n^2 + 1}$ and $p_n \to +\infty$, it follows that $\lim \gamma_{n,k}(\infty) = +\infty$.  

Moreover, 
\[
a_{p_n} c_{p_n} h_{p_n^2} h_{p_n^4} \ldots h_{p_n^{2^k}}(\infty) + a_{p_n} d_{p_n} > a_{p_n} d_{p_n},
\]
and 
\[
a_{p_n} d_{p_n} = \delta \frac{p_n^2 + 1}{p_n} + 2.
\]

According to Proposition $1.11.5$, we obtain:
\begin{align*}
B_{\infty}(\gamma_{n,k}^{-1}i, i) &= B_{\infty}(h_{p_n^{2^k}}^{-1} h_{p_n^{2^{k-1}}}^{-1} \ldots h_{p_n^2}^{-1} h_{p_n}^{-1}i, i) \\
&= \sum_{l=1}^{k} B_{h_{p_n^{2^l}} \ldots h_{p_n^{2^k}}\infty}(h_{p_n^{2^{l-1}}}^{-1}i, i) + B_{\infty}(h_{p_n^{2^k}}^{-1}i, i) \\
&= \sum_{l=1}^{k} B_{\infty}(h_{p_n^{2^{l-1}}}^{-1}i, i) + \sum_{l=1}^{k} \log \left|\frac{h_{p_n^{2^{l-1}}}^{-1}i - h_{p_n^{2^l}} \ldots h_{p_n^{2^k}}\infty}{i - h_{p_n^{2^l}} \ldots h_{p_n^{2^k}}\infty}\right|^2 \\
&\quad + B_{\infty}(h_{p_n^{2^k}}^{-1}i, i).
\end{align*}

We now analyze the limits:

\[
\lim_{n \to +\infty} B_{\infty}(h_{p_n^{2^k}}^{-1}i, i) = \lim_{n \to +\infty} -\log \operatorname{Im}(h_{p_n^{2^k}}^{-1}i) = \lim_{n \to +\infty} \log(a_{p_n^{2^k}}^2 + c_{p_n^{2^k}}^2) = \log \delta^2.
\]

Similarly,
\[
\lim_{n \to +\infty} \sum_{l=1}^{k} B_{\infty}(h_{p_n^{2^{l-1}}}^{-1}i, i) = k \log \delta^2.
\]

Finally,
\[
\lim_{n \to +\infty} \sum_{l=1}^{k} \log \left|\frac{h_{p_n^{2^{l-1}}}^{-1}i - h_{p_n^{2^l}} \ldots h_{p_n^{2^k}}\infty}{i - h_{p_n^{2^l}} \ldots h_{p_n^{2^k}}\infty}\right|^2 = k \log\left(\frac{(\delta + 1)^2}{\delta^2}\right),
\]
since
\[
\lim_{n \to +\infty} \frac{h_{p_n^{2^{l-1}}}^{-1}i}{h_{p_n^{2^l}} \ldots h_{p_n^{2^k}}\infty} = -\frac{1}{\delta}.
\]

In conclusion,
\[
\lim_{n \to +\infty} B_{\infty}(\gamma_{n,k}^{-1}i, i) = k \log(\delta^2) + \log(\delta^2) + k \log\left(\frac{(\delta + 1)^2}{\delta^2}\right) = 2 \log\left(\delta(\delta + 1)^k\right).
\]

Thus, there exists an unbounded sequence of times $(t_k)_k$ defined by $t_k = 2 \log\left(\delta(\delta + 1)^k\right)$ such that $g_{t_k}u \in \overline{h_{\mathbb{R}}u}$.
\end{proof}



\textbf{Acknowledgments}\\
This work was done during the author's stay in the Institut of Henri Poincaré  (UAR 839 CNRS-Sorbonne Université), LabEx CARMIN (ANR-10-LABX-59-01). The author expresses gratitude to Abdoul Karim Sané for the productive discussion and the support of NLAGA and AFRIMath.

\newpage

\end{document}